\setlist[enumerate]{label=(\roman*),leftmargin=0.8cm}
\newcommand{\CC}{\mathbb{C}}
\newcommand{\PP}{\mathbb{P}}
\newcommand{\calH}{\mathcal{H}}
\newcommand{\calL}{\mathcal{L}}
\newcommand{\calV}{\mathcal{V}}
\DeclarePairedDelimiter{\ip}{\langle}{\rangle}
\newcommand{\acts}{\curvearrowright}
\newcommand{\Lap}{\Delta}
\newcommand{\rk}{\operatorname{rk}} 
\newcommand{\vol}{\operatorname{vol}} 
\newcommand{\Lie}{\operatorname{Lie}}
\newcommand{\g}{\mathfrak{g}} 
\newcommand{\h}{\mathfrak{h}} 
\renewcommand{\k}{\mathfrak{k}} 
\newcommand{\ad}{\operatorname{ad}} 
\newcommand{\Ham}{\operatorname{Ham}} 
\newcommand{\Ric}{\operatorname{Ric}} 
\newcommand{\GL}{\operatorname{GL}} 
\newcommand{\SL}{\operatorname{SL}} 
\newcommand{\U}{\operatorname{U}} 
\newcommand{\Hom}{\operatorname{Hom}} 
\newcommand{\End}{\operatorname{End}} 
\newcommand{\Aut}{\operatorname{Aut}} 
\newcommand{\Isom}{\operatorname{Isom}} 
\newcommand{\Symp}{\operatorname{Symp}} 
\newcommand{\fr}{\mathcal{F}} 
\newcommand{\contr}{\Lambda}
\newcommand{\del}{\partial}
\newcommand{\delbar}{\overline{\partial}}
\newcommand{\deldelbar}{\partial \overline{\partial}}
\newcommand{\rest}[2]{\left.#1\right|_{#2}}
\newcommand{\id}{{\boldsymbol{1}}} 
\newtheorem{theorem}{Theorem}[section]
\newtheorem{lemma}[theorem]{Lemma}
\newtheorem{corollary}[theorem]{Corollary}
\newtheorem{proposition}[theorem]{Proposition}
\newtheorem{conjecture}[theorem]{Conjecture}
\theoremstyle{definition}
\newtheorem{definition}[theorem]{Definition}
\newtheorem{example}[theorem]{Example}
\renewcommand*{\eqref}[1]{%
	\hyperref[{#1}]{\textup{\tagform@{\ref*{#1}}}}%
}
\let\oldchi\chi
\renewcommand{\chi}{\protect\raisebox{\depth}{$\oldchi$}}
\title{Canonical metrics on holomorphic fibre bundles}
\author{John Benjamin McCarthy}
\let\oldtocsection=\tocsection
\let\oldtocsubsection=\tocsubsection
\let\oldtocsubsubsection=\tocsubsubsection
\renewcommand{\tocsection}[2]{\hspace{0em}\oldtocsection{#1}{#2}}
\renewcommand{\tocsubsection}[2]{\hspace{2em}\oldtocsubsection{#1}{#2}}
\renewcommand{\tocsubsubsection}[2]{\hspace{4em}\oldtocsubsubsection{#1}{#2}}
\begin{document}

\maketitle

\begin{abstract}
	In this article we completely describe the existence of canonical metrics, known as optimal symplectic connections, on isotrivial K\"ahler fibrations. In this setting an optimal symplectic connection is induced from a Hermite--Einstein connection on the holomorphic principal bundle of relative automorphisms, and the Hitchin--Kobayashi correspondence asserts the existence of such a connection precisely when the principal bundle is polystable. Combined with results of Dervan and Sektnan this generates many new examples of cscK metrics on the total space of holomorphic fibre bundles. Our results indicate that in general the optimal symplectic connection equation should be viewed as a generalisation of the Hermite--Einstein equation to holomorphic fibrations where the complex structure of the fibres varies.
\end{abstract}

\setcounter{tocdepth}{3}
\vspace*{2em}
\section{Introduction}

An optimal symplectic connection, introduced by Dervan and Sektnan, is a special metric on a holomorphic fibration which interpolates between special metrics on bundles (Hermite--Einstein metrics) and special metrics on varieties  (constant scalar curvature K\"ahler (cscK) metrics) \cite{dervan2019optimal,dervan2019moduli}. Indeed, on the one hand if a holomorphic fibration is the projectivisation of a holomorphic vector bundle, an optimal symplectic connection corresponds precisely to a Hermite--Einstein metric on the corresponding vector bundle. On the other hand, an optimal symplectic connection on a holomorphic fibration over a point is a constant scalar curvature K\"ahler metric on the unique fibre. 

For general holomorphic fibrations, it was proven by Dervan and Sektnan that when the fibration has cscK fibres and an appropriately twisted cscK metric on the base, the existence of an optimal symplectic connection implies the existence of a cscK metric in adiabatic K\"ahler classes of the total space provided the base and total space have discrete holomorphic automorphism groups. This is a generalisation for holomorphic fibrations of earlier results of Hong about the existence of cscK metrics on the total space of projectivisations of stable holomorphic vector bundles \cite{hong}, and of Fine in the case of fibrations where the fibres have discrete automorphism groups \cite{fine}.

The optimal symplectic connection equation,
$$p(\Lap_\calV \contr_{\omega_B} \mu^* F_\calH + \contr_{\omega_B} \rho_\calH) = 0,$$
is a differential equation for a relatively K\"ahler metric $\omega_X\in c_1(H)$ (which is cscK on each fibre), otherwise known as a \emph{symplectic connection}, on a holomorphic fibration $\pi: (X,H)\to (B,L)$ where $H$ is a relatively ample line bundle with respect to $\pi$. The optimal symplectic connection equation appears in the sub-leading order term in the adiabatic expansion of the scalar curvature of the K\"ahler form $\omega_X + k \omega_B$ on the total space of $X$ for $k\gg 0$, where it serves as the critical obstruction to perturbing this K\"ahler form to a cscK metric on $X$.

In this article, we investigate the optimal symplectic connection equation in the case of \emph{isotrivial} K\"ahler fibrations, where the complex structure is fixed as the fibres vary. This includes the previously understood cases of projectivisations of holomorphic vector bundles and of coadjoint orbits, but also includes any associated holomorphic fibre bundles to holomorphic principal bundles $P\to (B,L)$. Our main result is the following.

\begin{theorem}\label{theorem:maintheorem}
	If an isotrivial K\"ahler fibration $(X,H)\to (B,L)$ with cscK fibres arises as the associated bundle to a holomorphic principal $G$-bundle $P\to (B,L)$ with reductive structure group $G$, then $X$ admits an optimal symplectic connection if and only if $P$ admits a Hermite--Einstein connection.
\end{theorem}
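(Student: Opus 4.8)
We freely identify $(X,H)$ with $P\times_G(F,H_F)$, where $(F,H_F)$ is the model fibre, a polarised manifold admitting a cscK metric; in particular $\Aut_0(F,H_F)$ is reductive. Fix a maximal compact $K\subset G$; averaging a cscK metric in $c_1(H_F)$ over the image of $K$ in $\Aut_0(F,H_F)$, we may take $\omega_F\in c_1(H_F)$ to be cscK and $K$-invariant, with equivariant moment map $\mu_F\colon F\to\k^*$. A Hermitian metric $h$ on $P$ (equivalently, a reduction of structure group to $K$), with Chern connection $A=A_h$, then induces a relatively Kähler form $\omega_X(h)$ which restricts to $\omega_F$ on every fibre and represents $c_1(H)$ up to a form pulled back from $B$; this is a symplectic connection. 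Its horizontal distribution $\calH=\calH_h$ has curvature $F_{\calH_h}$ equal to $F_A\in\Omega^2(B,\ad P)$ acting through the infinitesimal action $\k\to\Gamma(T^{1,0}F)$, its fibrewise moment map is $\mu_F$, and its fibrewise Ricci data is, locally over $B$, the pullback of that of $(F,\omega_F)$.

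The core of the argument is to evaluate the optimal symplectic connection operator $p(\Lap_\calV\contr_{\omega_B}\mu^*F_\calH+\contr_{\omega_B}\rho_\calH)$ on the family $\{\omega_X(h)\}$. I anticipate that, with compatible normalisations of the moment maps, $\contr_{\omega_B}\mu^*F_{\calH_h}=\langle\mu_F,\Psi_h\rangle$, where $\Psi_h:=\contr_{\omega_B}F_{A_h}\in\Gamma(\ad P)$, that $\contr_{\omega_B}\rho_{\calH_h}$ is a similar pairing of $\Psi_h$ with $\mu_F$ together with a fibre-constant term of topological origin, and hence that the full bracket is a fibrewise differential expression $L(\langle\mu_F,\Psi_h-z_0\rangle)$ in the single section $\Psi_h$, for the topological central constant $z_0\in\mathfrak z(\k)$ appearing in the Hermite--Einstein equation. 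Since $p$ is the fibrewise $L^2$-projection onto the holomorphy potentials of $(F,\omega_F)$, and the functions $\langle\mu_F,\xi\rangle$ are precisely such potentials, one checks that the bracket vanishes identically when $\Psi_h=z_0$ — this is where the cscK property of $F$ enters — whereas $p$ of the bracket vanishes only when $\Psi_h-z_0$ is central, i.e. only when $\contr_{\omega_B}F_{A_h}=z_0$. Thus $\omega_X(h)$ is an optimal symplectic connection if and only if $h$ is Hermite--Einstein; in particular a Hermite--Einstein metric on $P$ produces an optimal symplectic connection on $X$, which is the ``if'' direction.

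For the converse, let $\omega_X$ be an optimal symplectic connection. It is relatively cscK, so by the uniqueness theorem for cscK metrics its restriction to each fibre $X_b$ is $\psi_b^*\omega_F$ for some $\psi_b\in\Aut_0(F,H_F)$, unique up to $\Isom_0(F,\omega_F)$. Because $X$ is \emph{associated} to $P$ with reductive structure group, this fibrewise data is a section of the bundle $P\times_G\bigl(\Aut_0(F,H_F)/\Isom_0(F,\omega_F)\bigr)$, whose fibre is a contractible symmetric space; it therefore corresponds to a complexified gauge transformation $g\in\mathcal G(P)^{\CC}$ applied to a fixed reference, and $g^{\ast}\omega_X$ restricts to $\omega_F$ on every fibre. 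One shows that such a relatively cscK form coincides, up to a pullback from $B$, with $\omega_X(h)$ for some Hermitian metric $h$ on $P$ — equivalently, that its horizontal distribution is a principal connection — and that the optimal symplectic connection equation is preserved under $\mathcal G(P)^{\CC}$, so that $\omega_X(h)$ is again optimal. The computation of the previous paragraph, read backwards, then forces $h$ to be Hermite--Einstein.

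The main obstacle is the converse direction, and within it the structural claim that an optimal symplectic connection is, up to the gauge group of $P$, of the model form $\omega_X(h)$. This has two delicate parts: upgrading the pointwise uniqueness of cscK metrics to a globally smooth and suitably holomorphic gauge transformation $g$ — which requires controlling the $\Isom_0(F,\omega_F)$-ambiguity and the regularity of $g$ over $B$, and is exactly where the associated-bundle hypothesis and the reductivity of $G$ are essential — and then identifying the resulting relatively cscK form, all of whose fibres are $(F,\omega_F)$, with a genuine $\omega_X(h)$, while matching the constant in the optimal symplectic connection equation with the Hermite--Einstein slope. The latter is governed by the precise definition of $\rho_\calH$ and by the moment-map conventions, and is the same bookkeeping that makes the forward computation of the second paragraph go through.
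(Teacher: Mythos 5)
Your overall architecture matches the paper's: the forward direction via the minimal-coupling form built from a complex unitary connection (the paper's \cref{prop:inducedsymplecticconnection}), reduction of the optimal symplectic connection operator to a condition on $\contr_{\omega_B}F_A$, and the converse by normalising the fibrewise metrics and reading the computation backwards. However, the steps that carry the real weight are asserted rather than established. The identification of $\contr_{\omega_B}\rho_\calH$ --- your ``similar pairing of $\Psi_h$ with $\mu_F$'' --- is the heart of the forward direction; the paper proves it (\cref{lemma:riccihamiltonian} together with \cref{lemma:horizontalcomponentricciform}) by observing that $\nu^*=\Lap\circ\mu^*$ is an equivariant comoment map for the Ricci form $\rho_Y$ and rerunning the minimal-coupling computation for $\rho_Y$, which yields $p(\contr_{\omega_B}\rho_\calH)=p(\Lap_\calV\contr_{\omega_B}\mu^*F_\calH)$; without this mechanism your bracket $L(\langle\mu_F,\Psi_h-z_0\rangle)$ remains a guess. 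Likewise, your claim that $p$ of the bracket vanishes \emph{only} when $\Psi_h-z_0$ is central needs an argument, since a priori $p\circ\Lap_\calV$ could have a larger kernel; the paper closes this by integration by parts, using that $h=\contr_{\omega_B}\mu^*F_\calH$ is fibrewise a mean-zero holomorphy potential (hence fixed by $p$), so that $\int_X h\,\Lap_\calV h\,\omega_X^m\wedge\omega_B^n=\int_X|\nabla h|^2\,\omega_X^m\wedge\omega_B^n$ forces $h=0$.

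The more serious gap is in the converse, where you pull $\omega_X$ back by a smooth complexified gauge transformation $g$ and assert that the optimal symplectic connection equation is preserved under $\mathcal{G}(P)^{\CC}$. As stated this fails: $g$ is fibrewise holomorphic but not holomorphic in the base directions, so $g^*\omega_X$ need not be of type $(1,1)$, the relative Ricci form is then not even defined, and the equation is only invariant under holomorphic automorphisms of the fibration and addition of pullbacks from $B$ (cf. \cref{thm:uniqueness}). The paper avoids this by never moving $\omega_X$: the fibrewise cscK metrics of $\omega_X$ themselves determine the reduction of structure group to $K_0$ (\cref{lemma:principalbundle}), the Ehresmann connection of $\omega_X$ induces the principal connection $A$ on the bundle of relative automorphisms (\cref{section:alternativeprincipalbundle}), and minimal coupling (\cref{thm:minimalcoupling}) identifies $\omega_X$ with the model form of \cref{prop:inducedsymplecticconnection} up to a pullback from $B$, after which the forward equivalence of \cref{theorem:maintheorembody} applies. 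Your normalisation should be recast in this way --- adapt the identification of $X$ with the associated bundle (equivalently the Hermitian reduction) to $\omega_X$ --- rather than appealing to invariance of the equation under the full complexified gauge group.
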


Using our main result, many new examples of cscK metrics on the total space of fibrations can be generated using the existence result of Dervan and Sektnan. This gives one of the most general methods of generating new examples of cscK metrics, which is typically a challenging problem.

\begin{corollary}\label{corollary:cscktotalspace}
	Let $P\to (B,L)$ be a holomorphic principal bundle with reductive fibre $G=K^\CC$ which admits a Hermite--Einstein structure, and suppose $G$ acts holomorphically on a polarised variety $(Y,H_Y)$ admitting a cscK metric, for which the $K$ action is isometric. Suppose futhermore that $B$ admits a cscK metric, and $B$ and the total space of the isotrivial fibration $(X,H)=P\times_G (Y,H_Y)$ have discrete automorphism groups. Then $X$ admits a cscK metric in the class $H+kL$ for $k\gg 0$. 
\end{corollary}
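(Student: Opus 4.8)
The plan is to deduce this directly by combining \Cref{theorem:maintheorem} with the gluing theorem of Dervan and Sektnan \cite{dervan2019optimal}, so that the main work is checking that the hypotheses of both results hold in the situation at hand. First I would verify that $(X,H) = P\times_G (Y,H_Y)$ is an isotrivial K\"ahler fibration with cscK fibres arising as an associated bundle of $P$: the line bundle $H$ is relatively ample because $H_Y$ is ample, and since the maximal compact $K\subset G$ acts isometrically on the cscK metric $\omega_Y\in c_1(H_Y)$, this metric induces a well-defined fibrewise K\"ahler form on $X$ which is cscK on every fibre; combined with any choice of connection on $P$ (for instance the Hermite--Einstein one) this yields a relatively K\"ahler metric $\omega_X\in c_1(H)$, that is, a symplectic connection in the sense of Dervan--Sektnan. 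With this in place, the hypothesis that $P$ admits a Hermite--Einstein connection lets me invoke the ``if'' direction of \Cref{theorem:maintheorem} to conclude that $X$ admits an optimal symplectic connection.

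It then remains to feed this into the Dervan--Sektnan existence theorem for cscK metrics in adiabatic classes. That theorem requires cscK fibres (established above), an optimal symplectic connection (just obtained), a \emph{twisted} cscK metric on $(B,L)$, and discrete holomorphic automorphism groups for $B$ and $X$ (assumed). The only non-immediate point is the base condition: one asks for a cscK metric on $B$ twisted by a form built from the fibrewise geometry, whose leading contribution is the Weil--Petersson form encoding the variation of the complex structure of the fibres. Because the fibration is isotrivial this Weil--Petersson term vanishes identically, and I would check that the residual part of the twist is cohomologically trivial (or can be absorbed into the K\"ahler class), so that the twisted cscK equation on $B$ collapses to the ordinary one; hence the hypothesis that $B$ admits a cscK metric is precisely what is needed. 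Applying the theorem then produces a cscK metric on $X$ in the adiabatic class $H+kL$ for all $k\gg 0$, which is genuinely K\"ahler for such $k$ since $H$ is $\pi$-relatively ample and $L$ is ample.

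The step I expect to be the main obstacle is matching the base hypothesis precisely, that is, confirming that isotriviality kills the entire twisting term appearing in the Dervan--Sektnan hypotheses (so that plain cscK on $B$, rather than a genuinely twisted equation, suffices), and more generally that the background relatively K\"ahler metric constructed above is admissible as the starting point of their implicit-function-theorem argument. In particular one must be sure that discreteness of $\Aut(B)$ and $\Aut(X)$ — rather than discreteness of $\Aut(Y)$, which may well fail — is enough to make the relevant linearised operator invertible, the continuous automorphisms of the fibre being controlled by the polystability of $P$ supplied through \Cref{theorem:maintheorem}. Everything else is bookkeeping with the associated-bundle construction.
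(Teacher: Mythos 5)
Your proposal is correct and follows essentially the same route as the paper: the corollary is obtained by combining \cref{theorem:maintheorem} (giving an optimal symplectic connection on the associated bundle, via the induced relatively K\"ahler metric of \cref{prop:inducedsymplecticconnection}) with the Dervan--Sektnan adiabatic existence theorem, whose twisted cscK hypothesis on the base reduces to the ordinary cscK condition because the Weil--Petersson-type twist vanishes for isotrivial fibrations, exactly as the paper remarks. Your worries about the twist and about needing only discreteness of $\Aut(B)$ and $\Aut(X)$ (not of the fibre) are resolved by the statement of the Dervan--Sektnan theorem itself, so no extra argument is required.
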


We note that when the complex structure of the fibres varies, one must require a twisting of the cscK metric on the base, but in our setting of isotrivial fibrations the correct twisting is trivial, so we only require $B$ to have a regular cscK metric.

The optimal symplectic connection condition also has a natural algebraic analogue, where it conjecturally corresponds to the notion of stability of a holomorphic fibration. Indeed a Hitchin--Kobayashi correspondence-type conjecture was proposed by Dervan and Sektnan:

\begin{conjecture}\label{conjecture:YTDfibrations}
	A holomorphic fibration $\pi : (X,H)\to (B,L)$ admits an optimal symplectic connection in $c_1(H)$ if and only if it is a polystable fibration.
\end{conjecture}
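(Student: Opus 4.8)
The plan is to establish this Yau--Tian--Donaldson-type correspondence by developing the infinite-dimensional geometric invariant theory in which the optimal symplectic connection equation is the vanishing of a moment map. Following Dervan and Sektnan, one takes the configuration space to be the space $\mathcal{S}$ of symplectic connections in $c_1(H)$, equipped with its natural $L^2$-symplectic form, acted on by the group $\mathcal{G}$ of relative automorphisms of $(X,H)$ covering the identity on $B$ and acting fibrewise-Hamiltonianly. The first step is to confirm the moment-map identity: that the assignment of $p(\Lap_\calV \contr_{\omega_B} \mu^* F_\calH + \contr_{\omega_B} \rho_\calH)$ to a symplectic connection is $\mathcal{G}$-equivariant and pairs correctly with the Lie algebra of $\mathcal{G}$, the operator $p$ being the $L^2$-projection onto that Lie algebra of fibrewise holomorphy potentials. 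This realises optimal symplectic connections as zeros of the moment map and frames the whole problem as a Kempf--Ness statement.

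For the easy direction, that existence implies polystability, I would run the standard convexity argument adapted to fibrations. One builds a fibred Mabuchi-type functional on the space of relative Kähler potentials whose critical points are the optimal symplectic connections and which is convex along the geodesics of the associated Mabuchi metric. A solution then makes this functional bounded below on each $\mathcal{G}^{\CC}$-orbit. To each fibration degeneration one associates a geodesic ray whose asymptotic slope recovers, up to a positive constant, the Donaldson--Futaki invariant $\DF$ of the degeneration; boundedness below forces $\DF \geq 0$, and the vanishing case, corresponding to geodesics that are trivial modulo the complexified automorphisms, is what distinguishes polystability from mere semistability. The technical content is the construction of geodesic rays approximating a given degeneration and the slope computation, which follow the cscK template but must accommodate the varying fibrewise complex structure.

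The hard direction, polystability implies existence, is where the real difficulty concentrates, and the natural route is to prove that polystability implies properness of the fibred Mabuchi functional and then deduce existence of a minimiser by a compactness argument. This requires a complete a priori estimate theory for the optimal symplectic connection equation --- a $C^0$ bound on the relative potential, followed by the usual higher-order bootstrap --- together with a degeneration analysis showing that a sequence of almost-minimisers either converges to a solution or produces, in the limit, a destabilising fibration degeneration, contradicting polystability. This dichotomy, the fibred counterpart of the partial $C^0$ estimate and the Chen--Cheng estimates in the cscK problem, is the crux of the whole correspondence.

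I expect the principal obstacle to be precisely this hard direction. The equation sits strictly between the Hermite--Einstein equation, where Uhlenbeck--Yau compactness renders the degeneration analysis tractable and which underlies the isotrivial case of \Cref{theorem:maintheorem}, and the cscK equation, whose own Yau--Tian--Donaldson conjecture remains open; any proof in the non-isotrivial case must confront the varying fibrewise complex structure, which is exactly what blocks the reduction to a linear gauge-theoretic problem. A second, structural obstacle is that the very definition of polystability --- the class of fibration degenerations against which one tests and the precise form of $\DF$ --- must be calibrated so that both the convexity estimate and the properness statement close up against the same notion; getting this matching right is itself a substantial part of the problem.
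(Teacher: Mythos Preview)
The statement you are attempting to prove is labelled a \emph{Conjecture} in the paper, and the paper does not prove it; there is therefore no ``paper's own proof'' against which to compare. What you have written is not a proof but a programme --- a reasonable outline of the standard Kempf--Ness strategy --- and you are candid that the crux, the hard direction, is unresolved. That candour is correct: the a priori estimates and degeneration dichotomy you invoke (the fibred analogues of the Chen--Cheng estimates and partial $C^0$ estimate) are not available in the literature, and you supply no argument for them. Since the cscK Yau--Tian--Donaldson conjecture, which is the special case of this statement over a point, is itself open in general, any purported proof of the full conjecture would in particular resolve that problem; your sketch does not engage with this.

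For the record, the easy direction you describe has been partially established exactly along the lines you indicate: the paper cites Dervan--Sektnan for semistability as a necessary condition and Hallam for polystability with respect to product-type degenerations via geodesic convexity. So that half of your outline matches known work. But the proposal as a whole should be understood as a description of why the conjecture is expected to hold and what a proof would require, not as a proof.
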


Some immediate progress was made towards this conjecture by Dervan and Sektnan, who showed that semistability of the fibration is a necessary condition for the existence of an optimal symplectic connection \cite{dervan2019moduli}. This result was strengthened by Hallam using geodesic analysis in the space of relatively K\"ahler metrics, who showed polystability with respect to certain fibration degenerations \cite{hallam2020geodesics}.

In the spirit of this central conjecture, we note that a Hitchin--Kobayashi correspondence has been proven for principal bundles \cite{subramanian1988einstein,anchouche2001einstein}. Thus as a corollary of our main result, we can link an algebraic stability condition to the existence of optimal symplectic connections on isotrivial fibrations.

\begin{corollary}
	An isotrivial K\"ahler fibration arising as the associated bundle of a holomorphic principal bundle admits an optimal symplectic connection if and only if the principal bundle is polystable.
\end{corollary}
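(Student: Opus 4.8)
The plan is to deduce this immediately by composing two equivalences: Theorem~\ref{theorem:maintheorem} and the Hitchin--Kobayashi correspondence for holomorphic principal bundles. First I would unwind the hypotheses: by assumption the fibration $(X,H)\to(B,L)$ is isotrivial with cscK fibres and is realised as an associated bundle $P\times_G(Y,H_Y)$ for a holomorphic principal bundle $P\to(B,L)$ with reductive structure group $G=K^\CC$. Theorem~\ref{theorem:maintheorem} then says that $X$ carries an optimal symplectic connection in $c_1(H)$ if and only if $P$ carries a Hermite--Einstein connection, so the corollary is reduced to the statement that $P$ admits a Hermite--Einstein connection precisely when $P$ is polystable.

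Second, I would invoke the Hitchin--Kobayashi correspondence for principal bundles over a compact K\"ahler manifold, as established by Ramanan--Subramanian in the projective case and by Anchouche--Biswas in the K\"ahler setting \cite{subramanian1988einstein,anchouche2001einstein}: a holomorphic principal $G$-bundle with reductive structure group over $(B,L)$ admits a Hermite--Einstein connection if and only if it is polystable with respect to $L$ (all reductions of structure group to parabolic subgroups have non-negative degree, with vanishing forcing a further reduction to a Levi factor). Combining this with the previous paragraph yields the claimed equivalence, with polystability taken in exactly this sense.

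The content here is entirely in Theorem~\ref{theorem:maintheorem}; the remaining work --- and the only place care is needed --- is a matching of conventions. I would check that the Hermite--Einstein condition on $P$ appearing in Theorem~\ref{theorem:maintheorem} is computed against the K\"ahler class dual to $L$ with no auxiliary twist --- which is precisely the point emphasised in the remark after Corollary~\ref{corollary:cscktotalspace}, namely that in the isotrivial case the required twisting of the base class is trivial --- that the compact real form $K$ and its complexification $G=K^\CC$ agree with those used in the cited references, and that the slope defining polystability of $P$ is normalised by $c_1(L)$. With these identifications the proof is a one-line chain of equivalences. I would also note in passing that polystability of $P$ is independent of the chosen fibre $(Y,H_Y)$, so this stability condition is intrinsic to the principal bundle, consistent with the expectation of Conjecture~\ref{conjecture:YTDfibrations}.
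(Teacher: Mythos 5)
Your proposal is correct and is exactly the paper's intended argument: the corollary follows by chaining Theorem~\ref{theorem:maintheorem} (optimal symplectic connection on $X$ if and only if Hermite--Einstein connection on $P$) with the Hitchin--Kobayashi correspondence for principal bundles of \cite{subramanian1988einstein,anchouche2001einstein}, with polystability measured against $[\omega_B]\in c_1(L)$ and no twist needed in the isotrivial setting. The only nitpick is attribution (the projective case is due to Ramanathan--Subramanian, the K\"ahler case to Anchouche--Biswas), which does not affect the mathematics.
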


We note that in the context of \cref{corollary:cscktotalspace}, one can replace the assumption of the existence of a Hermite--Einstein structure with the polystability of the principal bundle. Many examples of such principal bundles can be generated by taking the frame bundle of a polystable vector bundle, so \cref{corollary:cscktotalspace} provides many new examples of cscK metrics in practice.

Finally we prove that a fibred product of fibrations (even in the non-isotrivial case) admits an optimal symplectic connection whenever the two factors do.

\begin{theorem}
	If $(X,\omega_X), (X',\omega_{X'})\to (B,\omega_B)$ are two fibrations with optimal symplectic connections, then the direct product metric $\omega_X + \omega_{X'}$ is optimal symplectic on the fibred product $Z=X\times_B X'\to B$. 
\end{theorem}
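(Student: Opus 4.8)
The plan is to take $\omega_Z := q^*\omega_X + q'^*\omega_{X'}$, where $q\colon Z\to X$ and $q'\colon Z\to X'$ are the projections from the fibre product $Z = X\times_B X'$, and to check (a) that it is a symplectic connection on $Z\to B$ and (b) that every ingredient of the optimal symplectic connection equation for $Z$ is a ``sum of pullbacks'' of the corresponding ingredients for $X$ and for $X'$. Granting (b), the equation for $Z$ reduces termwise to the two equations we are assuming, so $\omega_Z$ is optimal symplectic.

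For (a), the fibre of $Z\to B$ over $b$ is $Z_b = X_b\times X'_b$ and $\omega_Z|_{Z_b} = \omega_{X_b}\oplus\omega_{X'_b}$ is the product K\"ahler metric; since the scalar curvature of a K\"ahler product is $\operatorname{pr}_1^*S(\omega_{X_b}) + \operatorname{pr}_2^*S(\omega_{X'_b})$, a sum of two constants, $\omega_Z$ restricts to a cscK metric on every fibre, and it is clearly closed of type $(1,1)$, so it is a symplectic connection. For (b) I would first identify the induced connection: the $\omega_Z$-orthogonal complement of $T_{Z/B} = q^*T_{X/B}\oplus q'^*T_{X'/B}$ inside $TZ\subset (TX\times TX')|_Z$ is exactly the fibre product of the horizontal distributions of $X\to B$ and $X'\to B$, so the horizontal lift to $Z$ of a vector on $B$ is the pair of horizontal lifts to $X$ and to $X'$; in particular the induced connection on $Z\to B$ is the ``product'' of those on $X$ and $X'$. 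Consequently its curvature, as a $2$-form on $B$ valued in Hamiltonian vector fields on the fibre, is the direct sum $F_{\calH}^X\oplus F_{\calH}^{X'}$, and since the Hamiltonian of $(v,v')$ on $X_b\times X'_b$ is $\operatorname{pr}_1^*h_v + \operatorname{pr}_2^*h_{v'}$ and the fibrewise mean-zero normalisation is preserved (as $\int_{Z_b} = \int_{X_b}\cdot\int_{X'_b}$ and each summand is already mean-zero on its own factor), we get $\mu^*F_{\calH}^Z = q^*\mu^*F_{\calH}^X + q'^*\mu^*F_{\calH}^{X'}$. Likewise $-K_{Z/B} = q^*(-K_{X/B})\otimes q'^*(-K_{X'/B})$ carries the product metric, so the relative Ricci form, and hence $\rho_{\calH}^Z$, is $q^*\rho_{\calH}^X + q'^*\rho_{\calH}^{X'}$; the fibrewise Laplacian acts diagonally, $\Lap_{\calV}^Z(q^*f + q'^*f') = q^*\Lap_{\calV}^Xf + q'^*\Lap_{\calV}^{X'}f'$, and $\contr_{\omega_B}$ sees only the common base.

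The remaining point is the fibrewise $L^2$-projection $p$ onto holomorphy potentials. By the K\"unneth theorem and connectedness of the fibres, $H^0(Z_b,T_{Z_b}) = H^0(X_b,T_{X_b})\oplus H^0(X'_b,T_{X'_b})$, so the holomorphy potentials on $Z_b$ are the direct sum of those pulled back from $X_b$ and from $X'_b$; with the mean-zero normalisation these two subspaces are $L^2(Z_b)$-orthogonal, and $p^Z$ also annihilates fibrewise-constant functions, so $p^Z(q^*g + q'^*g') = q^*(p^X g) + q'^*(p^{X'} g')$ for any $g$ on $X_b$ and $g'$ on $X'_b$. Combining this with (b), the left-hand side of the optimal symplectic connection equation for $Z$ is
\[
p^Z\!\left( q^*\bigl(\Lap_{\calV}^X\,\contr_{\omega_B}\mu^*F_{\calH}^X + \contr_{\omega_B}\rho_{\calH}^X\bigr) + q'^*\bigl(\Lap_{\calV}^{X'}\,\contr_{\omega_B}\mu^*F_{\calH}^{X'} + \contr_{\omega_B}\rho_{\calH}^{X'}\bigr)\right),
\]
which equals $q^*\,p^X\bigl(\Lap_{\calV}^X\contr_{\omega_B}\mu^*F_{\calH}^X + \contr_{\omega_B}\rho_{\calH}^X\bigr) + q'^*\,p^{X'}\bigl(\Lap_{\calV}^{X'}\contr_{\omega_B}\mu^*F_{\calH}^{X'} + \contr_{\omega_B}\rho_{\calH}^{X'}\bigr) = 0$, since $\omega_X$ and $\omega_{X'}$ solve their own optimal symplectic connection equations.

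Most of the work here is routine ``naturality under products'' bookkeeping: the additive decompositions of $\mu^*F_{\calH}$ and of $\rho_{\calH}$, and the fact that normalisations are respected. I expect the one genuinely structural step, and the place to be careful, to be the K\"unneth splitting of $H^0(Z_b,T_{Z_b})$: it is precisely what guarantees that $Z_b = X_b\times X'_b$ acquires no ``new'' fibrewise holomorphy potentials, hence that $p^Z$ is block-diagonal for the product decomposition --- without which the equation for $Z$ would not obviously follow from those for the factors.
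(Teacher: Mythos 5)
Your proposal is correct and follows essentially the same route as the paper: one checks that each ingredient of the equation --- the curvature $F_\calH$, the fibrewise Hamiltonians $\mu^* F_\calH$, the relative Ricci form $\rho_\calH$, the vertical Laplacian $\Lap_\calV$ and the projection $p$ --- splits additively under the fibred product, so the equation for $Z$ reduces termwise to the equations for the two factors. The only difference is that you justify the block-diagonality of $p$ explicitly via the K\"unneth splitting of fibrewise holomorphic vector fields together with $L^2$-orthogonality of the mean-zero potentials, whereas the paper only records (via Fubini) that the mean-zero normalisation splits; your extra care on that point is a refinement of, not a departure from, the paper's argument.
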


Combined with the existence results of optimal symplectic connections on isotrivial fibrations, this allows one to further generate new examples of optimal symplectic connections. We relate this to the more familiar story of the existence of Hermite--Einstein metrics on direct sums of polystable holomorphic vector bundles. In contrast with that setting, where the slopes of the polystable vector bundles must be equal, there is no topological matching condition required between the fibrations in order for an optimal symplectic connection to exist.

\subsection{Outlook}

To complete the understanding of optimal symplectic connections and stability of fibrations in the isotrivial case, it would suffice to describe precisely the relation between stability of principal bundles and stability of associated fibrations. Schematically represented:

\begin{figure}[h]
	\centering
	\begin{tikzcd}[%
	,row sep = 3em
	,column sep=7em
	]
		\text{Polystable } P \arrow[leftrightarrow]{r}{\text{Hitchin--Kobayashi}} \arrow[leftrightarrow,dashed]{d}{?} \arrow[leftrightarrow]{dr} & \text{Hermite--Einstein } P \arrow[leftrightarrow]{d}{\text{\cref{theorem:maintheorem}}}\\
		\text{Polystable } (X,H) \arrow[leftrightarrow,dashed,swap]{r}{\text{\cref{conjecture:YTDfibrations}}} & \text{Optimal symplectic } (X,\omega_X)
	\end{tikzcd}
\end{figure}

The algebraic analogue of our main theorem, the left-hand vertical arrow, has been investigated previously in the case of projective bundles by Ross and Thomas in a the context of slope K-stability \cite{ross2006obstruction}. A reinterpretation of Ross and Thomas' result in the new framework of stability of fibrations shows that for such projective bundles, the algebraic analogue of our main theorem is true. We will return to this problem for general isotrivial fibrations in a sequel article.

Finally we comment on the nature of the optimal symplectic connection equation for non-isotrivial fibrations. This setting is now genuinely outside the realm of Hermite--Einstein metrics, and our main result demonstrates that, at least in the case of smooth fibrations, the optimal symplectic connection equation should be thought of as a generalisation of the Hermite--Einstein equation to the setting of fibrations where the complex structure of the fibre varies. In this non-isotrivial setting it is important to modify the equation in order to allow strictly K-semistable fibres, as was observed by Ortu \cite{ortu2022optimal}. It is expected that our main result should find applications to the study of optimal symplectic connections on non-isotrivial deformations of isotrivial fibrations, as can appear in the work of Ortu.

It was observed by Dervan--Sektnan \cite{dervan2021uniqueness} that in the non-isotrivial but smooth Fano case, the optimal symplectic connection equation still admits a simplification to a Hermite--Einstein-type equation. Recent work on the algebraic side by Hattori \cite{hattori2022fibration} indicates that stable singular fibrations have mild singularities which may make the study of optimal symplectic connections with singular fibres tractable, and in the case where the smooth locus is isotrivial, one may hope to relate such singular fibrations to singular principal bundles, and singular Hermite--Einstein metrics.

\vspace{4mm} \noindent {\bf Acknowledgements:}  The author wishes to thank their PhD supervisors Simon Donaldson and Ruadha\'i Dervan for useful comments and suggestions, and Michael Hallam and Annamaria Ortu for useful discussions. The author was funded by the EPSRC and the London School of Geometry and Number Theory.

\section{Preliminaries}

\subsection{Optimal symplectic connections}

In this section we recall the definition of an optimal symplectic connection as introduced in \cite{dervan2019optimal}. This is a notion of special metric or special connection on a holomorphic fibration which intertwines the bundle theory (Hermite--Einstein metrics) and the K\"ahler theory (constant scalar curvature K\"ahler metrics). In particular the existence of an optimal symplectic connection is intimately related to the existence of cscK metrics on the total space of the fibration for adiabatic K\"ahler classes. 

\begin{definition}
	A \emph{smooth K\"ahler fibration} is a holomorphic surjective submersion $$\pi: (X,\omega_X) \to (B,\omega_B)$$ of complex manifolds where $\omega_B$ is a K\"ahler form on $B$ and $\omega_X$ is closed $(1,1)$-form on $X$ such that the restriction $\rest{\omega_X}{b}$ is a K\"ahler form on $X_b$ for every $b\in B$. 
\end{definition}

We will always denote the dimension of the base as $n$, and of the fibres as $m$. When the fibres are compact a smooth K\"ahler fibration is automatically a smooth fibre bundle by Ehresmann's lemma. Furthermore, since the two-form $\omega_X$ is non-degenerate in the vertical directions, there exists a horizontal subbundle $\calH\subset TX$ defined as the symplectic orthogonal complement of the vertical tangent bundle $\calV \subset TX$. Thus any smooth K\"ahler fibration comes with an Ehresmann connection $\calH$. We will refer to both $\calH$ and $\omega_X$ as a \emph{symplectic connection}. For more details of this construction see \cite[\S 1]{guillemin1996symplectic}\cite[\S 6]{mcduff2017introduction}. 

\begin{definition}
	The \emph{symplectic curvature} of $\omega_X$ is the curvature form $F_\calH \in \Omega^2(B, \Symp(\calV))$ of the Ehresmann connection $\calH$, which takes values in the symplectic vector fields of the fibres of $X$.
\end{definition}

In particular $F_\calH (v_1,v_2)\in \Symp(X_b)$ for every pair of vectors $v_1,v_2\in T_b B$ on $B$, where $\Symp(X_b)$ consists of the symplectic vector fields on $X_b$ with respect to $\rest{\omega_X}{b}$. The symplectic curvature is related to the original two-form $\omega_X$ in the following way, which is known as \emph{minimal coupling} in the symplectic fibrations literature.

\begin{theorem}[{\cite[\S 1.3]{guillemin1996symplectic}, \cite[Lem. 3.2]{dervan2019optimal}}]\label{thm:minimalcoupling}
	The symplectic curvature takes values in Hamiltonian vector fields. Furthermore, if $\mu^*: \Ham(\calV) \to C_0^{\infty}(X)$ denotes the map taking a vertical Hamiltonian vector field to its associated relative (mean zero) Hamiltonian function on $X$, and we abuse notation by identifying $\mu^* F_\calH$ with its pullback to $X$, which is a two-form which takes values in fibrewise Hamiltonian functions, then 
	$$\mu^* F_\calH = (\omega_X)_\calH + \pi^* \beta$$
	where $\beta\in \Omega^2(B)$ is some two-form on the base, and $(\omega_X)_\calH$ is the horizontal component of $\omega_X$ with respect to the orthogonal splitting it defines. 
\end{theorem}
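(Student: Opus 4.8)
This is the classical minimal coupling identity of \cite{guillemin1996symplectic}, and the plan is to deduce both assertions at once from a single use of $d\omega_X=0$. First I would record the splitting: since $\rest{\omega_X}{b}$ is nondegenerate on each fibre, the horizontal distribution $\calH=\orth{\calV}$ (orthogonal with respect to $\omega_X$) gives $TX=\calV\oplus\calH$, and tautologically $\omega_X$ pairs $\calV$ with $\calH$ trivially, so $\omega_X=(\omega_X)_\calV+(\omega_X)_\calH$ with no mixed part. For a vector field $v$ on $B$ write $\tilde v$ for its horizontal lift; for $v_1,v_2\in T_bB$, extended arbitrarily to vector fields near $b$, the symplectic curvature is the vertical vector field $F_\calH(v_1,v_2)=[\tilde v_1,\tilde v_2]^\calV=[\tilde v_1,\tilde v_2]-\widetilde{[v_1,v_2]}$ (sign conventions as in \cite{dervan2019optimal}), whose restriction to $X_b$ is independent of the extensions.

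The main step is to expand $d\omega_X(\tilde v_1,\tilde v_2,w)=0$ for an arbitrary vertical vector field $w$, using the invariant formula for the exterior derivative of a two-form. Two families of terms vanish: $\omega_X(\tilde v_i,w)=0$ since horizontal and vertical vectors are $\omega_X$-orthogonal, and $\omega_X([\tilde v_i,w],\tilde v_j)=0$ since $[\tilde v_i,w]$ is again vertical (it is $\pi$-related to $[v_i,0]=0$). Using also $\omega_X(\widetilde{[v_1,v_2]},w)=0$, what remains is
$$ w\bigl(\omega_X(\tilde v_1,\tilde v_2)\bigr)=\omega_X\bigl(F_\calH(v_1,v_2),w\bigr).$$
Setting $f:=\omega_X(\tilde v_1,\tilde v_2)=(\omega_X)_\calH(\tilde v_1,\tilde v_2)$ and restricting to $X_b$, this reads $\iota_{F_\calH(v_1,v_2)}\rest{\omega_X}{b}=d\,\rest{f}{X_b}$; hence $F_\calH(v_1,v_2)$ is a Hamiltonian vector field on $X_b$ with Hamiltonian $\rest{f}{X_b}$, which is the first assertion, and its mean-zero normalisation is $\mu^*F_\calH(v_1,v_2)=f-\bar f$, where $\bar f$ denotes the fibrewise average of $f$ against the symplectic volume of the fibres.

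To finish, note that $\mu^*F_\calH$, $(\omega_X)_\calH$ and every pullback $\pi^*\beta$ annihilate vertical vectors, so it suffices to compare them on pairs of horizontal lifts, where they equal $f-\bar f$, $f$, and $\beta(v_1,v_2)$ respectively. Thus $\mu^*F_\calH=(\omega_X)_\calH+\pi^*\beta$ holds with $\beta\in\Omega^2(B)$ defined by $\rest{\beta(v_1,v_2)}{b}:=-\rest{\bar f}{X_b}$. One must then check that $\beta$ really is a smooth two-form: it is alternating and bilinear, and its value at $b$ depends only on $v_1(b),v_2(b)$ because the integrand $\omega_X(\tilde v_1,\tilde v_2)$ does, so it is tensorial, while smoothness in $b$ is immediate since the fibrewise average is a ratio of fibre integrals depending smoothly on $b$ (and $\vol(X_b)$ is in fact locally constant, as horizontal lifts integrate to symplectomorphisms between fibres — this being the statement $\rest{(\mathcal{L}_{\tilde v}\omega_X)}{X_b}=d\,\rest{(\iota_{\tilde v}\omega_X)}{X_b}=0$, $\iota_{\tilde v}\omega_X$ killing $\calV$).

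There is no deep obstacle — the result is classical — but two points want care: the bookkeeping of sign and normalisation conventions, which are fixed precisely so that the coupling formula carries a plus sign, and the verification that the relevant Lie brackets are vertical so that they drop out when contracted into $\omega_X$. Compactness of the fibres is used only to make the fibrewise averaging meaningful.
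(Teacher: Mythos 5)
Your argument is correct and is exactly the standard proof of this cited result (the paper itself gives no proof, deferring to Guillemin--Lerman--Sternberg and to \cite[Lem.\ 3.2]{dervan2019optimal}): expanding $d\omega_X(\tilde v_1,\tilde v_2,w)=0$ against a vertical $w$, killing the mixed terms by $\omega_X$-orthogonality of $\calH$ and $\calV$ and the verticality of $[\tilde v_i,w]$, yields $\iota_{F_\calH(v_1,v_2)}\rest{\omega_X}{b}=d\rest{f}{X_b}$ with $f=\omega_X(\tilde v_1,\tilde v_2)$, which gives both the Hamiltonian property and, after mean-zero normalisation, the coupling identity with $\beta(v_1,v_2)=-\bar f$. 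Your closing caveats (tensoriality and smoothness of $\beta$, and the sign/normalisation conventions that put the plus sign in the formula) are the right ones and are handled adequately.
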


We now recall the optimal symplectic connection equation. In this case we we assume $(X,\omega_X)\to (B,\omega_B)$ is a relatively cscK fibration, so that the scalar curvature $S(\rest{\omega_X}{b})$ is constant for every $b$, and that the fibres of $X$ have positive-dimensional automorphism group, as the optimal symplectic connection equation is vacuous when the automorphism group of the fibre is discrete.

To specify the optimal symplectic connection equation, we require several ingredients, which are explained in more detail in \cite[\S 3]{dervan2019optimal}. Firstly we note that associated to the relatively K\"ahler form $\omega_X$ there exists a relative Ricci form $\rho$ on $X$, defined by the expression
$$\rho = -i \deldelbar \log (\omega_X)_\calV^m$$
where $m$ is the dimension of the fibres of $X$. This is the curvature of the induced Hermitian metric on the determinant of the relative tangent bundle of $X\to B$. The vertical component of $\rho$ on a fibre $X_b$ is just the Ricci form of the K\"ahler metric $\rest{\omega_X}{b}$, but $\rho$ may have non-trivial horizontal component arising from the fact that $\deldelbar$ is being taken on the total space of $X$.

Given a horizontal two-form $\beta$ on $X$, such as $\rho_\calH$ or $\mu^* F_\calH$, one may contract with $\omega_B$. If this form is the pullback of a form on $B$, then the contraction $\contr_{\omega_B} \beta$ is just the pullback of the associated contraction on $B$. In general one defines
$$\contr_{\omega_B} \beta = n \frac{\beta_\calH \wedge \omega_B^{n-1}}{\omega_B^n}$$
where the quotient is taken in the line bundle $\det \calH^*$. 

On functions we have a vertical Laplace operator defined by
$$\Lap_\calV f = \contr_{\omega_X} (i\deldelbar f)$$
where $\contr_{\omega_X}$ is the vertical contraction with $\omega_X$ taken in $\det \calV^*$. 

Finally we recall the existence of an orthogonal projection operator $p: C^\infty(X) \to C_E^\infty(X)$, where the latter space refers to the space of relatively mean zero holomorphy potentials of $X$ with respect to the fibrewise K\"ahler metrics $\rest{\omega_X}{b}$. The holomorphy potentials on a K\"ahler manifold $M$ are defined by
$$\h = \ker \delbar \nabla^{1,0} : C^\infty (M,\CC) \to \Omega^{0,1}(T^{1,0}M).$$
Any such potential $f$ defines a holomorphic vector field $\xi$ on $M$ by $\xi^j = g^{j\bar k} \del_{\bar k} f$. The projection operator $p$ sends a function on the total space of $X$ to its $L^2$-projection onto the the space $C_E^\infty(X)$ of relative mean zero holomorphy potentials for each fibre $(X_b, \rest{\omega_X}{b})$ where the $L^2$ inner product on functions is defined on each fibre with respect to $\rest{\omega_X}{b}^m$. It is necessary for these spaces of holomorphy potentials to form a vector bundle $E$ over the base (which is automatic in the isotrivial case). This bundle was identified by Hallam with the bundle of relatively cscK metrics \cite{hallam2020geodesics}, and they observed that these spaces form a bundle precisely when the dimension of the space of holomorphy potentials is constant over $B$. 

With these ingredients set, we can now define an optimal symplectic connection:

\begin{definition}[Optimal symplectic connection]
	One says a relatively cscK metric $\omega_X$ on a holomorphic fibration $X\to (B,\omega_B)$ is an \emph{optimal symplectic connection} if 
	$$p(\Lap_\calV \Lambda_{\omega_B} \mu^* F_{\calH} + \Lambda_{\omega_B} \rho_{\calH}) = 0.$$
\end{definition}

Since the projection operator $p$ is zero on pullbacks of functions from $B$, there is some redundancy in the existence of an optimal symplectic connection. Namely we note that combining this observation with \cref{thm:minimalcoupling} if $\omega_X$ is an optimal symplectic connection, so is $\omega_X + \pi^* \beta$ for any closed $(1,1)$-form $\beta$. In fact this redundancy and the action of the relative automorphism group of the fibration are the only degrees of freedom in choosing an optimal symplectic connection $\omega_X$. That is, optimal symplectic connections are unique in this sense:

\begin{theorem}[\cite{dervan2021uniqueness,hallam2020geodesics}]\label{thm:uniqueness}
	Suppose $\omega_X,\omega_X'$ are two cohomologous optimal symplectic connections on a holomorphic fibration $\pi: X\to (B,\omega_B)$ . Then there exists a holomorphic automorphism $g$ of the fibration $X\to B$, that is a biholomorphism $g: X\to X$ such that $\pi \circ g = \pi$, and a function $\varphi$ on $B$, such that
	$$\omega_X = g^* \omega_X' + \pi^* (i\deldelbar \varphi).$$
	Since the two forms are cohomologous, the automorphism $g$ can be taken to be in the identity component $\Aut_0(\pi)$ of the automorphism group $\Aut(\pi)$ of the fibration.
\end{theorem}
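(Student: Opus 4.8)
The plan is to follow the variational/geodesic strategy that establishes uniqueness of cscK metrics, adapted to the fibration setting by working in the space of relatively cscK metrics. First I would fix the relatively K\"ahler class $c_1(H)$ and consider the space $\mathcal{R}$ of relatively cscK representatives $\omega_X$ in this class. Since $\omega_X$ and $\omega_X'$ are cohomologous I may write $\omega_X' = \omega_X + i\deldelbar \phi$ for a smooth function $\phi$ on $X$; the freedom of adding $\pi^*(i\deldelbar\varphi)$ with $\varphi$ a function on $B$ corresponds exactly to modifying $\phi$ by a pullback from the base, which by \cref{thm:minimalcoupling} and the vanishing of the projection $p$ on pullbacks leaves the optimal symplectic connection equation unchanged. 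Thus the target identity is equivalent to showing that the two classes of $\omega_X,\omega_X'$ in $\mathcal{R}/\pi^*C^\infty(B)$ lie in a single orbit of the identity component $\Aut_0(\pi)$ of the relative automorphism group.

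The key structural input is that the optimal symplectic connection equation is the vanishing of a moment map for the action of the group of fibrewise Hamiltonian symplectomorphisms on the space of compatible relative complex structures, as in \cite{dervan2019optimal}. Associated to this moment-map picture is a Kempf--Ness-type energy functional $\calE$ on $\mathcal{R}$ (a fibration analogue of the Mabuchi K-energy) whose Euler--Lagrange equation is $p(\Lap_\calV \Lambda_{\omega_B}\mu^* F_\calH + \Lambda_{\omega_B}\rho_\calH)=0$; concretely, its first variation in a direction $\psi$ pairs $\psi$ against the left-hand side of the optimal symplectic connection equation in $L^2$, so that optimal symplectic connections are exactly the critical points of $\calE$.

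Next I would equip $\mathcal{R}$ with its natural $L^2$ Mabuchi-type metric and appeal to the geodesic analysis of Hallam \cite{hallam2020geodesics}: any two relatively cscK metrics are joined by a weak ($C^{1,1}$) geodesic, obtained as the solution of a fibrewise degenerate complex Monge--Amp\`ere equation coupled over the base. The central analytic step is then to prove that $\calE$ is convex along such geodesics, and strictly convex transverse to the orbits of $\Aut_0(\pi)$; this is the fibration counterpart of the Berman--Berndtsson / Chen convexity of the K-energy and is precisely the content extracted from \cite{dervan2021uniqueness,hallam2020geodesics}. Granting convexity, the proof concludes as in the cscK case: if $\omega_t$ is the geodesic from $\omega_X$ to $\omega_X'$, then $t\mapsto \calE(\omega_t)$ is convex with vanishing derivative at both endpoints (both being critical points of $\calE$), hence affine; strict convexity then forces the geodesic to be generated by a relative holomorphy potential, i.e.\ by a real holomorphic vector field tangent to the fibration. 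Integrating this vector field yields the automorphism $g\in\Aut_0(\pi)$ with $g^*\omega_X' = \omega_X + \pi^*(i\deldelbar\varphi)$, which is the desired identity.

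The hard part will be the convexity and regularity analysis of the third step: proving convexity of $\calE$ along weak geodesics requires a delicate integration-by-parts (Bochner-type) argument that must control the horizontal curvature terms $\mu^* F_\calH$ and $\rho_\calH$ and interact correctly with the fibrewise projection $p$, all at the limited $C^{1,1}$ regularity of the geodesic. Equally delicate is upgrading the affine-along-the-geodesic conclusion to a genuine holomorphic automorphism of the fibration rather than a merely formal or weakly regular vector field, which requires fibrewise elliptic regularity of holomorphy potentials together with the bundle structure of the space $E$ of relative holomorphy potentials noted above.
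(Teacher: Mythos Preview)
The paper does not give a proof of this theorem: it is quoted from \cite{dervan2021uniqueness,hallam2020geodesics} as background and used without argument, so there is no in-paper proof to compare against. Your sketch is, however, an accurate outline of the strategy carried out in those references---the moment-map interpretation, the log-norm/Mabuchi-type functional $\calE$, Hallam's weak geodesics in the space of relatively cscK metrics, convexity of $\calE$ along such geodesics, and the strict-convexity/holomorphy-potential step producing $g\in\Aut_0(\pi)$---so as a high-level summary it is essentially correct and matches the cited literature rather than anything in this paper.
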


This uniqueness statement can be interpreted as stating that an optimal symplectic connection is a canonical relative K\"ahler metric on a holomorphic submersion.

\subsection{Hermite--Einstein principal bundles}

We now recall the theory of Hermite--Einstein connections on holomorphic principal bundles, as in \cite{anchouche2001einstein}, where a Hitchin--Kobayashi correspondence for such connections is proven. The basic theory of Hermitian connections on principal bundles is explained in \cite[Ch. IX \S10]{kobayashi1963foundations}.

\begin{definition}
	A \emph{Hermitian structure} on a holomorphic principal bundle $P\to (B,L)$ with reductive structure group $G$ is a reduction of structure group $\sigma:B \to P/K$ to a maximal compact subgroup $K\subset G$. The associated principal bundle with structure group $K$ is denoted $P_\sigma$.
\end{definition}

One should think of the reduction of structure group $\sigma$ as analogous to choosing a Hermitian metric on a holomorphic vector bundle. In particular when $G=\GL(r,\CC)$ and $K=\U(r)$ then the quotient $G/K$ consists of Hermitian inner products on $\CC^r$, and a choice of $\sigma$ is precisely a Hermitian metric on the associated vector bundle $E\to (B,L)$ given by the standard representation. Any holomorphic principal bundle with reductive structure group admits Hermitian structures, since the contractibility of $G/K$ implies that the quotient fibre bundle $P/K$ always admits sections.

\begin{definition}[{\cite[Ch. IX \S10]{kobayashi1963foundations}}]
	A principal bundle connection $A\in \Omega^1(P, \g)$ on a holomorphic principal bundle is:
	\begin{itemize}
		\item \emph{Complex} if $A$ is of type $(1,0)$.
		\item \emph{Unitary with respect to a Hermitian structure $\sigma$} if $A$ is the pushforward of a principal bundle connection $A_\sigma$ on $P_\sigma$ under the associated bundle construction $P=P_\sigma \times_K G$ for the reduction of structure group $\sigma$.
	\end{itemize}
\end{definition}

We note that analogously to the existence of Chern connections, one has the following.

\begin{lemma}[{\cite[Thm. IX.10.1]{kobayashi1963foundations}}]
	Given a Hermitian structure $\sigma$ on a holomorphic principal bundle $P\to (B,L)$ with reductive structure group, there is a unique complex connection $A$ unitary with respect to $\sigma$, called the \emph{Chern connection} for $\sigma$.
\end{lemma}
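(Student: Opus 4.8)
The plan is to construct the connection first on the $K$-reduction $P_\sigma$ (viewed as a $K$-subbundle of $P$) and then extend the structure group, using only the reductive splitting $\g=\k\oplus i\k$ together with the fact that the $G$-action on $P$ is holomorphic, so that $J\zeta_X=\zeta_{iX}$ for the fundamental vector field $\zeta_X$ of $X\in\g$, where $J$ is the complex structure on $P$. The geometric fact I would establish first is that for $p\in P_\sigma$ there is a direct sum of real vector spaces $T_pP=T_pP_\sigma\oplus\zeta_{i\k}(p)$: since $G=K^\CC$ the real codimension of $P_\sigma$ in $P$ is $\dim_\RR K$, the tangent space to the $G$-fibre $\zeta_\g(p)=\{\zeta_X(p):X\in\g\}$ meets $T_pP_\sigma$ exactly in the tangent space $\zeta_\k(p)$ to the $K$-fibre, and $\zeta_{i\k}(p)\cap\zeta_\k(p)=0$, so a dimension count gives the splitting.

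From this, uniqueness is immediate. A connection form $A$ is complex, i.e. of type $(1,0)$, precisely when $A(Jv)=iA(v)$ for all real $v$; and if $A$ is unitary with respect to $\sigma$ then $A|_{TP_\sigma}$ is the $\k$-valued connection form $A_\sigma$ it is pushed forward from. So for $v\in T_pP_\sigma$, writing $Jv=v_1+\zeta_Y(p)$ with $v_1\in T_pP_\sigma$ and $Y\in i\k$, we get $A(Jv)=A(v_1)+Y$, and since $A(v_1)\in\k$, $Y\in i\k$, and $A(Jv)=iA(v)\in i\k$, comparing the two summands forces $A(v_1)=0$ and $A(v)=-iY$. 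Thus $A$ is determined on $TP|_{P_\sigma}$, hence on all of $P$ by $G$-equivariance of the connection form.

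For existence I would turn this around and define $A_\sigma\in\Omega^1(P_\sigma,\k)$ by $A_\sigma(v):=-iY$ where $Jv=v_1+\zeta_Y(p)$ as above, noting $-iY\in\k$. Checking that $A_\sigma$ is a genuine $K$-connection is routine: $A_\sigma(\zeta_X)=X$ for $X\in\k$ because $J\zeta_X=\zeta_{iX}$ with $iX\in i\k$, and $\Ad$-equivariance under $K$ holds because $J$ and the splitting are preserved by the right $K$-action. Extending the structure group yields a connection $A$ on $P=P_\sigma\times_K G$, unitary by construction, and it remains to verify that $A$ is of type $(1,0)$. By $G$-equivariance this need only be checked on $TP|_{P_\sigma}$. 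Given $v\in T_pP$, write $v=v_0+\zeta_Z(p)$ with $v_0\in T_pP_\sigma$, $Z\in i\k$, so $A(v)=A_\sigma(v_0)+Z$ and $Jv=Jv_0+\zeta_{iZ}(p)$ with $iZ\in\k$; decomposing $Jv_0=u+\zeta_W(p)$ one has $A_\sigma(v_0)=-iW$ and $A(Jv)=A_\sigma(u)+(W+iZ)$. The crucial point is the short identity $A_\sigma(u)=0$: applying $J$ to $Jv_0=u+\zeta_W(p)$ gives $-v_0=Ju+\zeta_{iW}(p)$, and since $iW\in i(i\k)=\k$ the term $\zeta_{iW}(p)$ lies in $T_pP_\sigma$, whence $Ju\in T_pP_\sigma$ and $A_\sigma(u)=0$. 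Therefore $A(Jv)=W+iZ=i(-iW+Z)=iA(v)$, as required.

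The step I expect to be the real obstacle is precisely this last verification: existence of a $K$-connection on $P_\sigma$ and its unitarity are essentially automatic, but the $(1,0)$-type condition genuinely uses the interplay of $J$, the reductive decomposition $\g=\k\oplus i\k$, and $J\zeta_X=\zeta_{iX}$, and must be handled with care. A parallel route, closest in spirit to the vector bundle Chern connection, would be to work in a local holomorphic trivialisation $P|_U\cong U\times G$: a complex connection then corresponds to a $(1,0)$-form $\theta\in\Omega^{1,0}(U,\g)$, the reduction is recorded by a smooth frame $h\colon U\to G$ of $P_\sigma$, and unitarity becomes the requirement that $\Ad_{h^{-1}}\theta+h^{-1}dh$ be $\k$-valued; comparing bidegrees forces the explicit formula $\theta=\Ad_h\!\left(\overline{h^{-1}\delbar h}-h^{-1}\del h\right)$, where the overline is the real-structure conjugation on $\g$-valued forms fixing $\k$, and one checks that this transforms correctly so as to glue to a global connection on $P$ (recovering $H^{-1}\del H$ when $G=\GL(r,\CC)$). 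Either way one obtains both existence and uniqueness of the Chern connection for $\sigma$.
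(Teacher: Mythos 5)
Your proof is correct. Note that the paper does not prove this lemma at all — it is quoted directly from Kobayashi–Nomizu (Thm.\ IX.10.1) — so there is no in-paper argument to compare against; what you have written is a self-contained proof of the cited result, and it is essentially the standard one. The key points all check out: the real splitting $T_pP = T_pP_\sigma \oplus \{\zeta_Y(p) : Y \in i\k\}$ at points of the reduction (your dimension count, using $\zeta_\g(p)\cap T_pP_\sigma = \zeta_\k(p)$, is right); uniqueness by comparing $\k$- and $i\k$-components of $A(Jv)=iA(v)$ together with $G$-equivariance and $P = P_\sigma\cdot G$; and for existence, the verification that the extended connection is of type $(1,0)$, where your identity $A_\sigma(u)=0$ (obtained by applying $J$ twice and using $iW\in\k$) is exactly the point that makes the computation close up. The equivariance of $A_\sigma$ under $K$ and the reduction of the type-$(1,0)$ check to $TP|_{P_\sigma}$ both rely, as you say, on right translations being biholomorphisms and $\Ad$ being complex linear, which is correct. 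The alternative local description is also consistent: unitarity in a holomorphic trivialisation forces $\Ad_{h^{-1}}\theta + h^{-1}\del h$ to be the conjugate of $h^{-1}\delbar h$ under the real structure fixing $\k$, giving your formula, which reduces to the familiar $H^{-1}\del H$ for $G=\GL(r,\CC)$ under the usual identification of $h$ with the metric. The only caveat is cosmetic: the phrase ``$A$ unitary means $A|_{TP_\sigma}=A_\sigma$'' deserves the one-line justification you implicitly use (horizontal spaces of the extended connection at points of $P_\sigma$ lie in $TP_\sigma$, and $A(\zeta_X)=X$), but this is routine.
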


Due to this lemma we can either think either in terms of Hermitian structures $\sigma$ or complex unitary connections $A$, the two perspectives being equivalent. 

\begin{definition}
	A Hermitian structure $\sigma$ on a holomorphic principal bundle $P\to (B,\omega_B)$ with reductive fibre over a K\"ahler manifold is \emph{weak Hermite--Einstein} if, when $A$ is the associated complex unitary connection, there exists a central section $\tau\in \Gamma(\ad P)$ of the adjoint bundle of $P$ such that
	$$\contr_{\omega_B} F_A = \tau$$
	where $F_A \in \Omega^2(B, \ad P)$ is the curvature of $A$. If furthermore $\tau$ is a covariantly constant section of $\ad P$ with respect to the induced connection, then we say $\sigma$ is \emph{Hermite--Einstein}.
\end{definition}

There exists a trivial central subbundle $Z\subset \ad P$ whose fibre at each point is the centre of the corresponding Lie algebra fibre of $\ad P$. This bundle is the pushdown of the trivial centre subbundle of $P\times \g$. Sections $\tau$ of $Z$ correspond to central sections of $P\times \g$ which are constant in the fibre direction (which is equivalent to equivariance under the $G$ action on $P$ for central elements of $\g$), and such a section $\tau$ is covariantly constant with respect to the induced connection on $\ad P$ if and only if it comes from a constant central section of $P\times \g$. When the section $\tau$ above is not constant we will call $\sigma$ a \emph{weak Hermite--Einstein structure} on $P$ with function $\tau$, by analogy with the case of vector bundles where $\tau = f \id_E$ where $f$ is a function on $B$ with the correct topologically determined average. In the principal bundle setting the average value of $\tau$ (computed as an integral over $B$ with respect to a choice of invariant metric on $\g$) is fixed by the topology of the principal bundle $P$, and agrees with the standard constant section $\tau = \lambda(E) \id_E$ of $\End E$ in the setting where $G=\GL(r,\CC)$ and one makes the identification $\ad P = \End E$.

We note that as in the case of vector bundles, it is equivalent to consider Hermite--Einstein structures or weak Hermite--Einstein structures for some non-constant section $\tau$ of $Z$, as one can show that any weak Hermite--Einstein structure becomes a genuine Hermite--Einstein structure after a conformal transformation of $\sigma$ (see \cite[Prop. 4.2.4]{kobayashi1987differential} for the case of vector bundles, which generalises in a straightforward way to principal bundles with reductive fibre).

We now briefly recall the Hitchin--Kobayashi correspondence for principal bundles. This relies on a notion of stability of principal bundles, which was first introduced by Ramanathan for the case of Riemann surfaces \cite{ramanathan1975stable}, and was extended to projective manifolds by Subramanian and Ramanathan and to arbitrary compact K\"ahler manifolds by Anchouche and Biswas. We will avoid the technicalities of precisely stating the notion of polystability of principal bundles since it will not be directly relevant to us, but in the case of $G=\GL(r,\CC)$ it corresponds exactly to the notion of polystability for the standard associated vector bundle.

\begin{theorem}[\cite{subramanian1988einstein,anchouche2001einstein}]
	A holomorphic principal bundle $G$-bundle on a compact K\"ahler manifold $(B,\omega_B)$ admits a Hermite--Einstein connection if and only if it is $[\omega_B]$-polystable.
\end{theorem}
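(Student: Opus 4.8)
The plan is to prove the two implications by separate arguments, taking the Hitchin--Kobayashi correspondence for holomorphic vector bundles (Narasimhan--Seshadri, Donaldson, Uhlenbeck--Yau) as a known input. For the implication that a Hermite--Einstein connection forces polystability I would run the standard Chern--Weil argument adapted to Ramanathan's notion of stability of principal bundles. Given a complex unitary connection $A$ with $\contr_{\omega_B} F_A = \tau$ central and covariantly constant, and a reduction $\sigma_Q$ of $P$ to a maximal parabolic $Q\subset G$ defined away from an analytic subset of complex codimension at least two, together with an antidominant character $\chi$ of $Q$ trivial on the centre of $G$, one forms the associated line bundle $L_\chi$ and writes its $\omega_B$-degree as a Chern--Weil integral of the reduced curvature. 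The Hermite--Einstein hypothesis and the vanishing of $\chi$ on the centre make the integrand a sum of a semi-positive term and a term integrating to zero, yielding $\deg_{\omega_B}(\sigma_Q^*L_\chi)\geq 0$, i.e. semistability; tracing the case of equality, the integrand must vanish pointwise, which forces the connection to reduce to the Levi factor $L_Q$, and iterating exhibits $P$ as an extension of structure group of a stable bundle carrying an irreducible Hermite--Einstein connection, which is polystability.

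For the converse I would reduce to the vector bundle case. Fix a faithful representation $\rho\colon G\to \GL(V)$ with $\rho(K)\subset \U(V)$, and set $E := P\times_\rho V$; note $\rho^{-1}(\U(V))\cap G = K$ by maximality of $K$, so a $\rho(K)$-reduction of $E$ recovers a $K$-reduction of $P$. The first step is that polystability of $P$ implies polystability of $E$: a destabilising subsheaf of $E$ would, through a Harder--Narasimhan/averaging argument, produce a parabolic reduction of $P$ contradicting Ramanathan semistability, and the polystable refinement follows from the canonical reduction of $P$ to a Levi with stable quotient together with the structural fact that associated bundles of stable principal bundles, and finite direct sums of stable bundles, are polystable. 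By the vector bundle correspondence $E$ then carries a Hermite--Einstein metric $h$, unique up to independently rescaling the polystable summands. The decisive step is to descend $h$ to a reduction of structure group of $P$ to $K$: one shows the orbit of $h$ under the residual symmetry given by the centraliser of $\rho(G)$ in $\GL(V)$ contains a $\rho(G)$-compatible metric --- located by averaging over the relevant compact symmetry group, or by minimising the convex Donaldson functional along the geodesic in the symmetric space of Hermitian metrics --- and Kobayashi's theorem then produces the corresponding complex unitary connection on $P$, on which the Hermite--Einstein equation for $E$ restricts to $\contr_{\omega_B} F_A = \tau$ with $\tau$ central and parallel.

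The main obstacle is precisely this last descent: a Hermite--Einstein metric on $E$ need not be compatible with the $G$-structure, so one must control the $\Aut(E)$-orbit of Hermite--Einstein metrics finely enough to locate a $G$-compatible point, which is exactly where polystability, rather than mere semistability, together with the uniqueness theorem for Hermite--Einstein metrics, must be combined; in the general Kähler (as opposed to projective) setting one must in addition ensure the Harder--Narasimhan and Seshadri filtrations and the reflexive extensions across the bad set behave well, which is the substance of the Anchouche--Biswas refinement. A more self-contained alternative I would keep in reserve avoids $E$ entirely: run Donaldson's nonlinear heat flow $\dot\sigma = -(\contr_{\omega_B} F_{A_\sigma}-\tau)$ directly on the space of Hermitian structures $\Gamma(B,P/K)$ --- an infinite-dimensional symmetric space of non-positive curvature since $G/K$ is --- set up the associated Donaldson functional, and show polystability forces coercivity modulo automorphisms so that the flow converges to a Hermite--Einstein structure; the estimates parallel the vector bundle case but now take values in $\ad P$.
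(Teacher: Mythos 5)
This statement is quoted in the paper as a known theorem of Ramanathan--Subramanian and Anchouche--Biswas; the paper gives no proof of it, so there is nothing internal to compare your argument against, and the right benchmark is the cited literature. Your first direction (Hermite--Einstein $\Rightarrow$ polystable) is the standard Chern--Weil argument over parabolic reductions with antidominant characters, and as a sketch it is essentially the argument in those references; no objection there beyond the usual care needed with reductions defined only outside a codimension-two set.

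The converse as you set it up has a genuine gap, in fact two. First, the claim that polystability of $P$ implies polystability of $E = P\times_\rho V$ for an \emph{arbitrary} faithful representation $\rho$ is false: take $G=\CC^*\times\CC^*$ acting on $V=\CC^2$ and $P$ the frame-type bundle of $L_1\oplus L_2$ with $\deg L_1\neq \deg L_2$; every such $P$ is polystable (all reductions are to the full torus), yet $E=L_1\oplus L_2$ is a direct sum of stable bundles of different slopes, hence not polystable. Associated bundles of polystable principal bundles are only polystable for representations that kill the connected centre (or after fixing slopes summand by summand), which is why Anchouche--Biswas work with the adjoint bundle $\ad P$ (slope zero, centre acts trivially) and prove $P$ polystable $\iff$ $\ad P$ polystable, rather than with a general faithful $\rho$. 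Second, the ``decisive step'' of descending the Hermite--Einstein metric on $E$ to a $K$-reduction of $P$ is exactly the hard point, and your proposed mechanisms do not obviously deliver it: averaging the Hermite--Einstein metric over the centraliser of $\rho(G)$ in $\GL(V)$, or moving along geodesics in that orbit, gives no reason for the output to be $\rho(G)$-compatible, since the set of $G$-compatible metrics is not the orbit of any compact group acting on the space of all Hermite--Einstein metrics. The actual descent in the literature uses the uniqueness of the Hermite--Einstein metric on $\ad P$ together with the fact that this metric and its Chern connection respect the Lie-algebra structure of $\ad P$, so the holonomy reduces and one reconstructs a reduction of $P$ to $K$ with $\Lambda_{\omega_B}F_A$ central and parallel. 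Your fallback heat-flow argument on $\Gamma(B,P/K)$ is a legitimate alternative strategy (a principal-bundle analogue of Donaldson's proof), but as written it is only a programme: the coercivity-from-polystability step is precisely the content of the theorem and is not supplied. So the proposal identifies the right landscape but does not close the converse direction.
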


\section{Isotrivial fibrations}

We now turn to the study of isotrivial fibrations.

\begin{definition}
	A K\"ahler fibration is \emph{isotrivial} if it is a holomorphic fibre bundle. 
\end{definition}

Note that by the Fischer--Grauert theorem this is equivalent to asking that the fibres of the smooth K\"ahler fibration, which is a proper holomorphic submersion, are biholomorphic \cite{fischergrauert}.

We will be interested in smooth isotrivial relatively cscK fibrations $(X,\omega_X)\to (B,\omega_B)$. Let $H$ be relatively ample on $X$ and suppose $\omega_X\in c_1(H)$. Then for the model fibre $(Y,\omega_Y,H_Y)$ of $X$, where $\omega_Y$ is cscK, the automorphism group of $(Y,H_Y)$ is reductive \cite[\S 2.4, \S 8.1]{gauduchon2010calabi}. 

To summarise, let $G_0=\Aut_0(Y,H_Y)$ denote the the connected component of the identity of the group of holomorphic automorphisms of $Y$ which lift to $H_Y$, and let $K_0=\Isom_0(Y,\omega_Y,H_Y)$ denote the subset of $\Aut_0(Y,H_Y)$ of holomorphic isometries of $\omega_Y$. Then $K_0\subset G_0$ is a maximal compact subgroup. The Lie algebra $\h = \Lie(G_0)$ of complex holomorphy potentials of $Y$ can be identified with the holomorphic vector fields on $Y$ which vanish at least once. The Lie algebra $\k= \Lie(K_0) \subset \h$ is identified with the \emph{real} holomorphy potentials, and $\h = \k \oplus J\k$ where $J$ is the complex structure on $Y$. Integrating up to the Lie group, $G_0=K_0^\CC$, so $G_0$ is reductive.

In the case of fibrations the above description of the automorphism group has the following consequence.

\begin{lemma}\label{lemma:principalbundle}
	A smooth polarised isotrivial relatively cscK fibration $$\pi: (X,\omega_X,H)\to (B,\omega_B,L)$$ with model fibre $(Y,\omega_Y,H_Y)$ arises as the associated bundle to a reductive holomorphic principal $G_0=\Aut_0(Y,H_Y)$-bundle $P$ which admits a reduction of structure group $\sigma: B\to P/K_0$ to a principal $K_0=\Isom_0(Y,\omega_Y,H_Y)$-bundle $P_\sigma$ such that
	$$X = P\times_{G_0} Y = P_\sigma \times_{K_0} Y.$$
\end{lemma}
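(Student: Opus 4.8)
The plan is to construct the principal $G_0$-bundle $P$ directly as a bundle of marked isomorphisms between the model fibre and the actual fibres of $X$, and then to identify the symplectic connection $\omega_X$ with a reduction of structure group. First I would fix the model fibre $(Y,\omega_Y,H_Y)$ and, for each $b\in B$, consider the set $P_b$ of biholomorphisms $Y \to X_b$ that lift to an isomorphism $H_Y \to \rest{H}{X_b}$; by the Fischer--Grauert theorem (and since $X\to B$ is a holomorphic fibre bundle with fibre $Y$) this set is nonempty, and the group $G_0 = \Aut_0(Y,H_Y)$ acts on it by precomposition. One must be slightly careful: a priori $P_b$ is a torsor under $\Aut(Y,H_Y)$, not the identity component $G_0$, so I would either pass to the sub-bundle of isomorphisms lying in a fixed path-component (using local triviality of $X\to B$ to make a consistent choice), or note that the relevant structure group for the associated bundle construction is exactly the one acting effectively on the data we care about; in any case, local trivialisations of $X\to B$ over a cover $\{U_\alpha\}$ give local sections of $P$, hence transition functions valued in $G_0$, endowing $P := \bigsqcup_b P_b$ with the structure of a holomorphic principal $G_0$-bundle over $B$. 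By construction the associated fibre bundle $P\times_{G_0} Y$ is canonically isomorphic to $X$ (the map $[p,y]\mapsto p(y)$ is well-defined, fibrewise biholomorphic, and holomorphic in trivialisations), and it intertwines the relative polarisations, so $X = P\times_{G_0}Y$ as polarised fibrations.

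Next I would upgrade this to the reduction of structure group using the relatively cscK symplectic connection $\omega_X$. For each $b$, the restriction $\rest{\omega_X}{X_b}$ is a cscK metric on $X_b$ in the class $c_1(\rest{H}{X_b})$, and by uniqueness of cscK metrics in a fixed class up to automorphism (Bando--Mabuchi / the reductivity setup recalled above), it is equivalent via some $p\in P_b$ to the model metric $\omega_Y$ on $Y$. Thus the subset $P_{\sigma,b} := \{p\in P_b : p^*(\rest{\omega_X}{X_b}) = \omega_Y\}$ is nonempty, and it is a torsor under the stabiliser of $\omega_Y$ in $G_0$, which is precisely $K_0 = \Isom_0(Y,\omega_Y,H_Y)$. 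These sub-torsors assemble into a principal $K_0$-sub-bundle $P_\sigma \subset P$ — equivalently, a section $\sigma: B\to P/K_0$ — provided one checks holomorphic (or at least smooth) dependence on $b$, which follows because $\omega_X$ is a smooth form on the total space and the cscK condition is fibrewise. Since $K_0\subset G_0$ is a maximal compact subgroup (as recalled from \cite{gauduchon2010calabi}), this $\sigma$ is exactly a Hermitian structure on $P$ in the sense of the earlier definition, and tautologically $X = P_\sigma \times_{K_0} Y$ because $\omega_X$ restricted to each fibre is carried to $\omega_Y$ by the chosen frames.

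The main obstacle I anticipate is not the set-theoretic construction but verifying that $P$ and $P_\sigma$ are genuinely \emph{holomorphic} (resp.\ smooth) principal bundles with the claimed structure groups, rather than merely topological ones. Two points need care: (i) showing the transition functions are holomorphic, which requires that a biholomorphism between fibres varying holomorphically in $b$ assembles into a holomorphic map — this is where one uses that $X\to B$ is a holomorphic submersion and invokes Fischer--Grauert more carefully, or works in local holomorphic trivialisations where it becomes a statement about holomorphic families of automorphisms of $Y$; and (ii) the reduction to $K_0$ is only smooth, not holomorphic — the section $\sigma$ of $P/K_0$ depends smoothly (not holomorphically) on $b$, exactly as a Hermitian metric on a holomorphic vector bundle does — so I would phrase the conclusion accordingly, matching the definition of Hermitian structure given above. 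A secondary subtlety is pinning down the structure group as the identity component $G_0 = \Aut_0(Y,H_Y)$ rather than the full automorphism group; I would handle this by using that $B$ can be covered by contractible opens over which $X$ trivialises, so the transition cocycle can be chosen to land in the identity component, and $G_0 = K_0^\CC$ is reductive as already noted.
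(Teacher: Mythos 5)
Your proposal is correct and takes essentially the same approach as the paper: both use the holomorphic local trivialisations of the isotrivial fibration to produce the principal $G_0$-bundle $P$ (you package it intrinsically as the bundle of marked biholomorphisms $Y\to X_b$ lifting the polarisation, which is precisely the paper's own alternative description of $P$ in \cref{section:alternativeprincipalbundle}), and both obtain the smooth reduction $\sigma\colon B\to P/K_0$ from uniqueness of the fibrewise cscK metrics up to automorphisms in the reduced automorphism group, $K_0$ being the stabiliser of $\omega_Y$. The only caveat is your suggested fix for landing in the identity component via a contractible cover, which does not by itself control the values of the cocycle on overlaps; your other suggestion (fixing a consistent path-component of frames using local triviality, i.e.\ frames isotopic to the identity) is the right one and is what the paper implicitly does.
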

\begin{proof}
	Since $X\to B$ is a holomorphic fibre bundle, it admits a holomorphic system of local trivialisations, say $\{(U_\alpha, \varphi_\alpha)\}.$ Fix any $b_0\in B$ and any $\beta$ with $b_0\in U_\beta$. Define a model cscK metric $\omega_Y := \rest{{\varphi_\beta}_*}{b} \omega_X$ and polarisation $H_Y := \rest{{\varphi_\beta}_*}{b} H$. Let $G_0 = \Aut_0(Y, H_Y)$ and $K_0 = \Isom_0(Y, \omega_Y, H_Y)$. 
	
	By the uniqueness of cscK metrics up to automorphisms, for any local trivialisation $\psi$ on $V$ for $X$ and any $b\in V$, there exists a holomorphic automorphism $g_b$ of $Y$ taking $\rest{\psi_* \omega_X}{b}$ to $\omega_Y$. Since $\psi_* \omega_X$ and $\omega_Y$ are cohomologous, this may be taken to lie inside the reduced automorphism group of biholomorphisms which lift to the line bundle $H_Y$ for which $\omega_Y \in c_1(H_Y)$. Performed for the covering by the $U_\alpha$, this defines a system of local sections of a principal $G_0$-bundle over $B$. The cocycle condition for this system of sections follows from that of the trivilising functions $\varphi_\alpha$ for this cover. 
	
	Furthermore, if $b\in U_\alpha$ then under the identification of $X_b$ with $Y$ with respect to the biholomorphism $\varphi_\alpha$, the automorphism group $K_0$ for $Y$ can be identified with a conjugate of $\Isom_0(X_b, \rest{\omega_X}{b}, H_b)$ in $\Aut_0(X_b, H_b)$. The cocycle condition for this system of local trivialisations guarantees that on overlaps of the $U_\alpha$ the isometry groups of $b\in U_{\alpha}\cap U_{\beta}$ are mapped to the same conjugate. This defines a smooth section $\sigma: B \to P/K_0$ specifying the desired reduction of structure group to $K_0$.
\end{proof}

\subsection{Induced optimal symplectic connections}

We suppose now that we have an isotrivial fibration arising as the associated holomorphic fibre bundle to some holomorphic principal bundle with reductive structure group, which reduces to a principal bundle for a maximal compact subgroup of the structure group. As we observed, every isotrivial relatively cscK fibration arises in this way, although in the following we make no assumption that the associated principal bundle has structure group the identity component of the automorphism group of the model fibre.

Explicitly, fix a smooth polarised variety $(Y,H_Y)$ with a constant scalar curvature K\"ahler metric $\omega_Y\in c_1(H_Y)$. Assume that a reductive group $G$ acts linearly on $(Y,H_Y)$ and furthermore that there exists a maximal compact subgroup $K\subset G$ for which the restriction of the $G$ action to $K$ preserves the K\"ahler metric $\omega_Y$, that is, assume that $K$ acts by holomorphic isometries on $Y$. Such an action of $K$ on $(Y,\omega_Y)$ is always Hamiltonian, where the Hamiltonian function of any induced vector field is given by the real holomorphy potential with respect to $\omega_Y$. Let us denote by $\mu: Y \to \k^*$ a corresponding moment map. Finally let us assume that $P$ admits a reduction of structure group $\sigma$ to $K$, and let $P_\sigma$ denote the principal $K$-bundle which induces $P$. Associated to this data is a holomorphic fibre bundle
$$\pi: X = P\times_G Y = P_\sigma \times_K Y \to B,$$
associated to $P$. Given such an associated bundle, there is an induced symplectic connection on $X$ given by the cscK metric on $Y$ and a complex unitary connection on $P$. This follows essentially from working with the smooth principal $K$-bundle $P_\sigma$ and applying a theorem of Weinstein (see for example \cite[Thm. 6.3.3]{mcduff2017introduction}). We reproduce the details here to emphasise the relationship between $P_\sigma$ and $P$ in our setting and sto how that the resulting form $\omega_X$ is a $(1,1)$-form on $X$.

\begin{proposition}\label{prop:inducedsymplecticconnection}
	Given the set up above, any choice of complex unitary connection $A$ on $P$ induces a symplectic connection $\omega_X\in c_1(H)$ on $X$.
\end{proposition}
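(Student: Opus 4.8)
The plan is to build $\omega_X$ explicitly by the standard minimal-coupling / Weinstein construction, working on the principal $K$-bundle $P_\sigma$, and then to check the two non-trivial properties: that the resulting form lies in $c_1(H)$, and that it is of type $(1,1)$ (which is where the complex unitary hypothesis on $A$ enters). First I would recall that a complex unitary connection $A$ on $P$ restricts to a principal bundle connection $A_\sigma$ on $P_\sigma$ with values in $\k$, whose curvature $F_{A_\sigma}\in\Omega^2(B,\ad P_\sigma)$ satisfies, under the identification $\k\cong\k^*$ via an invariant inner product and the moment map $\mu:Y\to\k^*$, the pairing $\langle\mu,F_{A_\sigma}\rangle$, a genuine two-form on the total space of $P_\sigma\times_K Y = X$. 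Concretely, on the product $P_\sigma\times Y$ one forms the $K$-basic two-form $\omega_Y + d\langle\mu, A_\sigma\rangle$ (the coupling form), where $\langle\mu,A_\sigma\rangle$ is the $\RR$-valued one-form obtained by contracting the tautological $Y$-valued function with the $\k$-valued connection form; this descends to a closed two-form $\omega_X$ on $X$ whose vertical restriction to each fibre is a translate of $\omega_Y$, hence cscK. That $\omega_X$ restricts to $\omega_Y$ on fibres, together with closedness, is exactly the content of Weinstein's theorem as cited, so I would invoke \cite[Thm. 6.3.3]{mcduff2017introduction} for those points and concentrate on the two remaining claims.

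For the cohomology class, the point is that $H = P_\sigma\times_K H_Y$ carries a natural holomorphic structure as an associated bundle, and the Hermitian metric on $H_Y$ (whose curvature is $-2\pi i\,\omega_Y$, say after normalising $\omega_Y\in c_1(H_Y)$) together with the connection $A_\sigma$ induces a Hermitian metric on $H$ over $X$; I would compute that the curvature of this induced metric is precisely $-2\pi i\,\omega_X$ (this is the reason the moment-map term appears — it is exactly the correction coming from differentiating the local potentials of the Hermitian metric on $H_Y$ across the bundle), so $\omega_X\in c_1(H)$. The moment map hypothesis is what makes the transition functions of $H$ act by isometries up to the $K$-action, so that this patching is consistent.

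The step I expect to be the main obstacle is showing $\omega_X$ is of type $(1,1)$ on $X$. A priori the Weinstein coupling form is only closed and real; its $(2,0)+(0,2)$ part must be shown to vanish. This is where one uses that $A$ is a \emph{complex} connection on $P$ (of type $(1,0)$), not merely a unitary connection on $P_\sigma$: the holomorphic structure on $X = P\times_G Y$ is the one induced from $P$ and the $G$-action on the complex manifold $Y$, and I would argue that with respect to this complex structure the horizontal distribution determined by the $(1,0)$-connection $A$ is a complex subbundle complementary to the (complex) vertical bundle, so that the decomposition $\omega_X = (\omega_X)_\calV + (\omega_X)_\calH + \text{(mixed)}$ is a decomposition into $(1,1)$-pieces: the vertical part is $\omega_Y$, a $(1,1)$-form on $Y$; the horizontal part is $\langle\mu,F_A\rangle$ evaluated using the curvature of the \emph{complex} connection, which is of type $(1,1)$ precisely because $F_A^{0,2}=0$ for a complex connection on a holomorphic bundle; and the mixed term vanishes because the coupling one-form $\langle\mu,A\rangle$ is built from the $(1,0)$-form $A$. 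I would make this rigorous by the local computation in a holomorphic trivialisation of $P$, writing $\omega_X$ locally on $U\times Y$ as $\omega_Y$ plus $i\deldelbar$ of a potential involving the local connection matrix and the Kähler potential of $\omega_Y$, which simultaneously re-proves the $(1,1)$ and the $c_1(H)$ statements. The bookkeeping of matching the abstract Weinstein form with this local $i\deldelbar$-expression is the genuinely technical part, but it is a computation rather than a conceptual difficulty.
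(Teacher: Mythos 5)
Your proposal is correct and follows essentially the same route as the paper: the paper likewise builds the coupling form $\omega_Y - d\langle \mu, A\rangle = \pi_A^*\omega_Y - \langle \mu, F\rangle$ on $P_\sigma\times Y$ (reproducing the Weinstein computation rather than just citing it), checks it is basic and descends it to $X$, and obtains the $(1,1)$ property exactly as you do, from the curvature of a complex unitary connection having type $(1,1)$ together with the orthogonal vertical--horizontal splitting. The only differences are cosmetic: your sign on the coupling term is a moment-map convention (the paper's convention forces the minus sign), and your explicit curvature computation for the induced Hermitian metric on $H = P_\sigma\times_K H_Y$ spells out the claim $\omega_X\in c_1(H)$, which the paper's proof leaves implicit.
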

\begin{proof}
	Let $v_\xi$ denote the induced vector field on $Y$ from some $\xi\in \k$ under the action of $K$ on $Y$. Then $v_\xi$ preserves the K\"ahler structure of $Y$. 
	
	Let $y\in Y, \hat y \in T_y Y, \eta \in \k$. Then we have the two identities
	$$\ip{d\mu(y) \hat y, \xi} = \omega_Y(v_\xi (y), \hat y),$$
	$$\ip{\mu(y), [\xi,\eta]} = \omega_Y(v_\xi(y),v_\eta(y)).$$
	The first follows from the definition of a moment map, and the second from the infinitesimal equivariance condition on the moment map.
	
	Let us abuse notation by writing $A=A_\sigma$ to denote the connection on the reduction of structure group $P_\sigma$ of $P$, and let $F\in \Omega^2(P_\sigma;\k)$ denote the curvature form of $A_\sigma$. Then for $p\in P_\sigma, \xi \in \k, v_1,v_2\in T_p P_\sigma$ we also have the standard expressions
	$$A_p(p\xi) = \xi,$$
	$$F_p(v_1,v_2) = (dA)_p(v_1,v_2) + [A_p(v_1),A_p(v_2)].$$
	Define a projection operator $\pi_A: TP_\sigma\times TY \to TY$ by $\pi_A(v,\hat y) = \hat y + v_{A_p(v)}(y).$ Let us define a two-form $\hat \omega_X\in \Omega^2(P_\sigma \times Y)$ by
	$$\hat \omega_X := \omega_Y - d\ip{\mu,A}.$$
	Then note that $\hat \omega_X$ is closed on $P_\sigma \times Y$.

	One may write 
	\begin{equation}\label{eqn:formequality}
		\hat \omega_X = \pi_A^* \omega_Y - \ip{\mu, F}.
	\end{equation}
	Indeed, using the identities above and the definition of $\pi_A$, we compute
	\begin{align*}
		&(\pi_A^* \omega_Y - \ip{\mu, F})_{(p,y)} ((v_1,\hat y_1), (v_2,\hat y_2))\\
		&= \omega_Y(\hat y_1 + v_{A_p(v_1)}(y), \hat y_2 + v_{A_p(v_2)} (y)) - \ip{\mu(y), F_p(v_1,v_2)}\\
		&= \omega_Y (\hat y_1, \hat y_2) + \omega_Y (v_{A_p(v_1)}(y), \hat y_2) - \omega_Y (v_{A_p(v_2)}(y), \hat y_1)\\
		&\qquad + \omega_Y (v_{A_p(v_1)}(y), v_{A_p(v_2)}(y))\\
		&\qquad - \ip{\mu(y), (dA)_p(v_1,v_2)} - \ip{\mu(y), [A_p(v_1),A_p(v_2)]}\\
		&=\omega_Y (\hat y_1, \hat y_2) + \ip{d\mu(y)\hat y_2, A_p(v_1)} - \ip{d\mu(y) \hat y_1, A_p(v_2)}\\
		&\qquad + \ip{\mu(y), [A_p(v_1),A_p(v_2)]}\\
		&\qquad - \ip{\mu(y), (dA)_p(v_1,v_2)} - \ip{\mu(y), [A_p(v_1),A_p(v_2)]}\\
		&= \omega_Y(\hat y_1, \hat y_2) - d\ip{\mu, A}_p ((v_1, \hat y_1), (v_2,\hat y_2)).
	\end{align*}

	From \eqref{eqn:formequality} it follows that the two-form $\hat \omega_X$ is $K$-invariant and horizontal for the quotient map $P_\sigma \times Y \to P_\sigma \times_K Y = X$. In particular a vector $(v,\hat y)$ is vertical with respect to this projection precisely if $v$ is a vertical tangent vector to $P$ and $\pi_A(v,\hat y) = 0$. From this it follows immediately that $\iota_{(v,\hat y)} \hat \omega_X = 0$. Since $\hat \omega_X$ is also closed this holds infinitesimally, and $\hat \omega_X$ is basic. 
	
	Additionally, the moment map condition for $\mu$ and $\omega_Y$ implies the $K$-equivariant closedness of $\omega - \mu$, and combined with the Bianchi identity this implies the equivariant closedness of $\tilde \omega_X$.
	
	Therefore the $K$-invariant and equivariantly closed two-form $\tilde \omega_X$ descends to a closed two-form $\omega_X$ on the quotient $X = P_\sigma \times_K Y$. The explicit expression 
	$$\omega_X((v_1,\hat y_1),(v_2, \hat y_2)) = \omega_Y(\hat y_1 + v_{A_p(v_1)}(y), \hat y_2 + v_{A_p(v_2)} (y)) - \ip{\mu(y), F_p(v_1,v_2)}$$
	shows that $\rest{\omega_X}{b} = \omega_Y$ since a vertical vector of $X$ takes the form $(0, \hat y)$, and  $(\omega_X)_{\calH} = \mu^* F_\calH$, since a horizontal vector is given by $(v,0)$ where $v$ is horizontal on $P$. Since we assumed that the initial connection $A$ was complex, the curvature $F$ has type $(1,1)$, and so $\omega_X$ is a closed $(1,1)$-form on $X$.
\end{proof}

In order to investigate when the symplectic connection $\omega_X$ induced on $X$ is optimal, we will use the following fact about compact K\"ahler manifolds which will simplify the optimal symplectic connection equation in the isotrivial setting.

\begin{lemma}[See for example {\cite[Lem. 28]{szekelyhidi2012blowing}}]\label{lemma:riccihamiltonian}
	If $h$ is a Hamiltonian function for a K\"ahler metric $\omega$ on a compact K\"ahler manifold, with Hamiltonian vector field $v$, then $\Lap h$ is formally the Hamiltonian function for the (not necessarily symplectic) two-form $\Ric \omega = \rho$ with the same vector field.
\end{lemma}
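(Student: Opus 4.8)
The plan is to reduce the statement to a short computation in local holomorphic coordinates. The one point to pin down at the outset is that the hypothesis must be read as: $h$ is a \emph{holomorphy potential}, i.e.\ the Hamiltonian vector field $v$ of $h$ is real holomorphic, so that its $(1,0)$-part $\xi := v^{1,0}$ is a genuine holomorphic vector field. (This is the implicit assumption in the setting where the lemma is used, and it is exactly what makes the conclusion true; for an arbitrary Hamiltonian $h$ the gradient need not be holomorphic and the lemma fails.) Fix holomorphic coordinates $z^1,\dots,z^n$, write $\omega = i g_{j\bar k}\, dz^j\wedge d\bar z^k$ and $\rho = \Ric\,\omega = -i\deldelbar\log\det(g_{j\bar k}) =: i\rho_{j\bar k}\, dz^j\wedge d\bar z^k$ (so $\rho_{j\bar k} = -\del_j\del_{\bar k}\log\det g$). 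The relation between $h$ and $v$ reads $\xi^j = \pm i\, g^{j\bar k}\del_{\bar k}h$, with the sign fixed by the convention $\iota_v\omega = \pm dh$, while holomorphy of $v$ is the statement $\del_{\bar k}\xi^j = 0$ for all $j,k$. Since $\rho$ and $\Lap h$ are real it is enough to verify the $(0,1)$-part of the claimed identity, namely that $\iota_\xi\rho$ stands to $\delbar(\Lap h)$ exactly as $\iota_\xi\omega$ stands to $\delbar h$; the $(1,0)$-part then follows by conjugation and summing gives $\iota_v\rho = d(\Lap h)$.

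The computation has three steps. \emph{(i)} Rewrite the Laplacian as a divergence: by the Kähler identity $\del_j g^{j\bar k} = -g^{p\bar k}\,\del_p\log\det g$, $\Lap h = \contr_\omega(i\deldelbar h) = g^{j\bar k}\del_j\del_{\bar k}h$ is a fixed scalar multiple of $\del_j\xi^j + \xi^j\,\del_j\log\det g$, the divergence of $\xi$ against $\omega^n/n!$. \emph{(ii)} Differentiate this in an antiholomorphic direction: holomorphy of $\xi$ gives $\del_{\bar k}\xi^j = 0$, hence also $\del_{\bar k}\del_j\xi^j = \del_j\del_{\bar k}\xi^j = 0$, so the only surviving term is that $\del_{\bar k}(\Lap h)$ is a constant multiple of $\xi^j\,\del_j\del_{\bar k}\log\det g = -\xi^j\rho_{j\bar k}$. \emph{(iii)} Recognise the contraction: since $\xi$ is of type $(1,0)$ we have $\iota_\xi\rho = i\,\xi^j\rho_{j\bar k}\, d\bar z^k$ and $\iota_\xi\omega = i\,\xi^j g_{j\bar k}\, d\bar z^k$, so step (ii) says precisely that $\iota_\xi\rho$ stands to $\delbar(\Lap h)$ exactly as $\iota_\xi\omega$ stands to $\delbar h$. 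Conjugating and adding gives $\iota_v\rho = d(\Lap h)$, i.e.\ $v$ is Hamiltonian for the closed $(1,1)$-form $\rho$ with potential $\Lap h$. The word ``formally'' absorbs the remaining normalization choices: the factor of $2$ between $\Lap$ and the Riemannian Laplacian, the sign conventions for $\Lap$ and for $\iota_v\omega$ (which between them determine whether the potential for $\rho$ is $\Lap h$ or $-\Lap h$), and the fact that $\rho$ need not be non-degenerate, so that $v$ is not literally recoverable from $\Lap h$ through $\rho$.

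I do not foresee any real obstacle: once the Kähler identity $\del_j g^{j\bar k} = -g^{p\bar k}\del_p\log\det g$ is recalled, steps (i)--(iii) are routine, and the only care needed in the write-up is the bookkeeping of the sign and $i$ conventions and an explicit remark that ``Hamiltonian'' must mean ``holomorphy potential'' — it is the vanishing $\del_{\bar k}\xi^j = 0$ in step (ii) that makes all other terms disappear. There is also a coordinate-free route: $\mathcal{L}_v\omega = d(dh) = 0$ and $\mathcal{L}_v J = 0$ (as $v$ is real holomorphic) force $v$ to be Killing, whence $\mathcal{L}_v\rho = 0$ and so $\iota_v\rho$ is closed; combined with the $\delbar$-exactness of $\iota_\xi\rho$ (again from holomorphy of $\xi$) and Hodge theory on the compact Kähler manifold this produces a primitive of $\iota_v\rho$, and identifying it with $\Lap h$ is the computation above again. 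I would present the coordinate version, as it is the most transparent and pins down the constant outright.
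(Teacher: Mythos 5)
Your proof is correct, and you have correctly isolated the hypothesis that the paper leaves implicit: the lemma needs $v$ to be real holomorphic (equivalently, $h$ a holomorphy potential), which is exactly the ``$v$ preserves $J$ and $\omega$'' used in the paper's argument and is satisfied in the only place the lemma is applied (Hamiltonians of a $K$-action by holomorphic isometries). The route, however, is genuinely different. You work in local holomorphic coordinates: rewrite $\Lap h$ as the divergence $\del_j\xi^j+\xi^j\del_j\log\det g$ of $\xi=\nabla^{1,0}h$ via the K\"ahler identity $\del_j g^{j\bar k}=-g^{p\bar k}\del_p\log\det g$, differentiate antiholomorphically using $\del_{\bar k}\xi^j=0$, and recognise the surviving term as $\iota_\xi\rho$; conjugation then gives $\iota_v\rho=d(\Lap h)$. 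The paper instead argues invariantly in four lines with Cartan's formula, $\iota_v(dJd\log\det\omega)=\calL_v(Jd\log\det\omega)-d\iota_v(Jd\log\det\omega)=-d\,\calL_{Jv}\log\det\omega=d\Lambda\,\calL_{Jv}\omega$, and then uses $\calL_{Jv}\omega=-2i\deldelbar h$. Your version buys an elementary, pointwise identity (no compactness, no Lie-derivative bookkeeping) in which the constant can be pinned down explicitly once conventions are fixed; the paper's version buys brevity and makes manifest precisely which invariance properties of $v$ enter. Both arguments share the same convention-dependence of the overall sign that you flag -- indeed the paper's final line is itself only correct up to such bookkeeping -- but since the lemma feeds into \cref{lemma:horizontalcomponentricciform} and thence into the optimal symplectic connection equation, where a sign flip would change the conclusion, in a final write-up you should fix the conventions for $\Lap$, $\iota_v\omega$ and $\rho$ once and verify the sign outright rather than leaving it to the word ``formally''. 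Your alternative coordinate-free sketch (Killing field, closedness of $\iota_v\rho$, Hodge theory) is closer in spirit to the paper's proof, except that the paper produces the primitive directly from Cartan's formula and so never needs Hodge theory; I agree with your choice to present the coordinate computation as the main argument.
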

\begin{proof}
	\begin{align*}
		2\iota_v \rho &= \iota_v (dJd \log \det \omega)\\
		&= \calL_v (Jd \log \det \omega) - d\iota_v (Jd \log \det \omega)\\
		&= -d (\calL_{Jv} \log \det \omega)\\
		&= d\Lambda \calL_{Jv} \omega.
	\end{align*}
	Here we have used that $v$ preserves $J$ and $\omega$, where $\Lambda$ is the trace with respect to $\omega$. But $\calL_{Jv} \omega = -2i\deldelbar h$ so $\iota_v \rho = d\Lap h$. 
\end{proof}

The key argument which demonstrates how the optimal symplectic connection equation simplifies for isotrivial fibrations is the following.

\begin{proposition}\label{lemma:horizontalcomponentricciform}
	Given a complex unitary connection on a holomorphic principal bundle $P\to (B,\omega_B)$ with reductive fibre $G$, and an associated K\"ahler fibration $X=P\times_G (Y,\omega_Y)$ with cscK fibre, the relative Ricci form $\rho$ of the induced symplectic connection $\omega_X$ is related to the curvature of the connection $A$ by
	$$p(\contr_{\omega_B} \rho_\calH) = p(\Lap_\calV \contr_{\omega_B} \mu^* F_\calH).$$
\end{proposition}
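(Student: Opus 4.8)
The plan is to compute the horizontal component of the relative Ricci form $\rho = -i\deldelbar \log (\omega_X)_\calV^m$ directly from the explicit description of $\omega_X$ obtained in \cref{prop:inducedsymplecticconnection}, and to show that modulo terms killed by the projection $p$ it agrees with $\Lap_\calV \contr_{\omega_B}\mu^* F_\calH$. The starting observation is that, fibrewise, $(\omega_X)_\calV = \omega_Y$ is the fixed cscK form on the model fibre, so $\log (\omega_X)_\calV^m$ is genuinely constant along each fibre and in a local $G$-equivariant trivialisation it is pulled back from $Y$ by the local identification $X_b \cong Y$; the only source of nontrivial horizontal dependence is that this identification varies over $B$ according to the connection $A$. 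Concretely, near a point of $B$ one can use a local gauge of $A$ so that the horizontal lift of $v\in T_bB$ moves a point $y\in Y$ by the vector field $v_{A(v)}(y)$ plus a term involving the curvature; the relative volume form $\omega_Y^m$ is invariant under the $K$-action, so the first-order horizontal variation of $\log \omega_Y^m$ vanishes and the leading horizontal contribution to $\rho$ comes from the second-order/curvature term $\ip{\mu, F}$ appearing in the formula $(\omega_X)_\calH = \mu^* F_\calH$ for the horizontal part of $\omega_X$.

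The key step is then the following: by \cref{lemma:riccihamiltonian}, for the fixed fibre $(Y,\omega_Y)$, applying $\Lap_\calV$ to a fibrewise Hamiltonian potential converts it (formally, up to a constant) into the Hamiltonian potential for $\Ric \omega_Y$ with the same vector field. Now $\mu^* F_\calH$ is, fibrewise, a two-form valued in fibrewise Hamiltonian functions for vector fields coming from $\k$, and contracting with $\omega_B$ produces a function on $X$ which is, fibrewise, the Hamiltonian potential $\langle \mu, \contr_{\omega_B} F\rangle$ of the vector field $v_{\contr_{\omega_B}F}$. Applying $\Lap_\calV$ to this and invoking \cref{lemma:riccihamiltonian} replaces $\mu$ by the moment map $\mu_\rho$ for the action of $K$ on $(Y, \Ric\omega_Y)$ — which exists because the $K$-action preserves $\omega_Y$ hence preserves $J$ and $\Ric\omega_Y$, and $(Y,\omega_Y)$ is cscK so the relevant equivariant cohomology class is unobstructed. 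On the other hand, unwinding the definition $\rho = -i\deldelbar\log(\omega_X)_\calV^m$ on the total space and extracting the horizontal part, one finds that $\contr_{\omega_B}\rho_\calH$ is, fibrewise, precisely $\langle \mu_\rho, \contr_{\omega_B}F\rangle$ up to a term which is pulled back from $B$. Since $p$ annihilates pullbacks from $B$, the two sides agree after applying $p$.

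The main obstacle will be making the identification $\rho_\calH \leftrightarrow$ ``moment map for $\Ric\omega_Y$'' rigorous at the level of forms on the total space, rather than just formally fibrewise: $\Ric\omega_Y$ is not symplectic, so one cannot literally speak of its moment map, and one must carry out the computation of $-i\deldelbar$ on $X$ in local coordinates adapted to the splitting $TX = \calV\oplus\calH$ and carefully track which horizontal terms are exact on the base (hence killed by $p$) versus which genuinely contribute. The cleanest route is to mimic the proof of \cref{lemma:riccihamiltonian} one dimension up: write $\rho_\calH$ via the vertical Lie derivative of $\log\det\omega_Y$ along the curvature-induced vector fields, use that these vector fields preserve $J$ and $\omega_Y$ fibrewise, and reduce to the identity $\calL_{Jv}\omega_Y = -2i\deldelbar h$ applied with $v = v_{\contr_{\omega_B}F}$, exactly as in \cref{lemma:riccihamiltonian} but now with the $\deldelbar$ taken on $X$. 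The cross terms between the $\deldelbar$ and the base directions produce either the desired $\Lap_\calV\contr_{\omega_B}\mu^*F_\calH$ term or base-pullback terms; collecting them and applying $p$ yields the claimed equality.
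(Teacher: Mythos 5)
You have correctly identified the paper's key idea: \cref{lemma:riccihamiltonian} upgrades the comoment map $\mu^*$ to a Ricci-type comoment $\nu^* := \Lap \circ \mu^*$ satisfying $d\nu^*(\xi) = \iota_{v_\xi}\rho_Y$, and since $p$ annihilates contractions of forms pulled back from $B$, it suffices to show that $\contr_{\omega_B}\rho_\calH$ agrees with $\contr_{\omega_B}\nu^* F_\calH = \Lap_\calV \contr_{\omega_B}\mu^* F_\calH$ up to such a pullback. However, the identity on which everything hinges --- $\rho_\calH = \nu^* F_\calH + \pi^*\beta$ for some two-form $\beta$ on $B$ --- is asserted in your proposal (``unwinding the definition \dots one finds'') with its justification deferred to a coordinate computation of $\deldelbar \log (\omega_X)_\calV^m$ that you yourself flag as the main obstacle and do not carry out. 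The point your sketch does not address is why the discrepancy between $\rho_\calH$ and $\nu^* F_\calH$ is a \emph{pullback} from $B$ rather than merely a horizontal form with nontrivial fibre dependence: $p$ only kills base pullbacks, so a fibre-dependent horizontal error would survive, and nothing in the sketched ``collect the cross terms'' argument rules this out. That is a genuine gap, located exactly at the content of the proposition.

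The paper closes this gap with a structural argument that avoids any local computation, and which you could graft onto your plan. Since $K$ acts by isometries, $\Lap$ is $K$-equivariant, so $\nu^*$ satisfies the same formal moment-map identities with respect to $\rho_Y$ (including the infinitesimal equivariance $\nu^*([\xi,\eta]) = \rho_Y(v_\xi,v_\eta)$) that $\mu^*$ does with respect to $\omega_Y$; hence the minimal-coupling construction of \cref{prop:inducedsymplecticconnection} runs verbatim for the pair $(\rho_Y,\nu^*)$, producing a closed $(1,1)$-form $\tau_A = \pi_A^*\rho_Y - \nu^* F$ on $P_\sigma\times Y$ which descends to $X$ with $(\tau_A)_\calV = \rho_\calV$ and $(\tau_A)_\calH = \nu^* F_\calH$. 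One then invokes \cite[Lem. 3.9]{dervan2019optimal}: two closed $(1,1)$-forms agreeing in the vertical directions have horizontal parts equal up to pullback from the base. This is precisely what legitimises your ``up to a term pulled back from $B$'', and it uses the closedness of both $\rho$ and $\tau_A$, which your direct route would also have to establish together with an analogue of that lemma. Two smaller remarks: the appeal to cscK and equivariant-cohomological unobstructedness to produce $\mu_\rho$ is unnecessary, since $\nu^* = \Lap\circ\mu^*$ is explicit and \cref{lemma:riccihamiltonian} needs only a Hamiltonian isometric action; and passing from $\contr_{\omega_B}\nu^* F_\calH$ to $\Lap_\calV\contr_{\omega_B}\mu^* F_\calH$ should be justified by noting that $F_\calH$ and $\omega_B$ are pulled back from $B$, so contraction commutes with applying $\mu^*$, $\nu^*$ and $\Lap_\calV$.
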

\begin{proof}
	As above, suppose we have an action  $K\acts (Y,\omega_Y)$ of a real compact Lie group by holomorphic isometries on a K\"ahler manifold $Y$. By assumption this action admits an equivariant moment map $\mu: Y \to \k^*$. Let us define a comoment-type map 
	$$\nu^* : \k \to C_0^{\infty}(Y)$$
	by the composition
	$$\nu^* = \Lap \circ \mu^*.$$
	A restatement of \cref{lemma:riccihamiltonian} implies that the comoment map $\nu^*$ satisfies the standard moment map criterion with respect to the Ricci form $\Ric \omega_Y = \rho_Y$. That is,
	$$d\nu^*(\xi) = i_{v_\xi} \rho_Y.$$
	Additionally, since $K$ acts by isometries on $Y$, the Laplacian is $K$-equivariant as a morphism $C_0^{\infty} (Y) \to C_0^{\infty} (Y)$, and therefore the composition $\nu^* = \Lap \circ \mu^*$ is a $K$-equivariant map from $\k$ to $C_0^{\infty} (Y)$ with respect to the adjoint action of $K$ on $\k$. Differentiating this condition at the identity in $K$ gives the infinitesimal equivariance condition
	$$\nu^*([\xi, \eta]) = \rho_Y (v_\xi, v_\eta)$$
	for the comoment map $\nu^*$, which explicitly gives the interesting geometric formula
	$$\rho_Y(v_\xi, v_\eta) = \Lap(\omega_Y(v_\xi, v_\eta))$$
	for induced vector fields $v_\xi,v_\eta$. 
	
	Whilst the comoment map $\nu^*$ is not a genuine comoment map for a \emph{symplectic} form, it satisfies the same formal properties with respect to the $K$ action relative to $\rho_Y$ as $\mu^*$ does relative to $\omega_Y$. In particular the argument of \cref{prop:inducedsymplecticconnection} repeats without change for the differential form
	$$\tilde \tau_A := \rho_Y - d(\nu^* A) = \pi_A^* \rho_Y - \nu^* F$$
	on the product $P_\sigma \times Y$. Thus there exists a closed $(1,1)$-form $\tau_A$ on $X=P_\sigma \times_K Y$ with the property that $$(\tau_A)_\calV = (\rho)_\calV$$
	where $\rho$ is the relative Ricci form of $\omega_X$ itself. Additionally the explicit formula for $\tau_A$ reveals that
	$(\tau_A)_\calH = \nu^* F_\calH$.
	
	By \cite[Lem. 3.9]{dervan2019optimal}, if two closed $(1,1)$-forms agree when restricted to the vertical directions of a fibration, then their horizontal components are equal up to pullback from the base. In particular we have
	$$\rho_\calH + \pi^* \beta= \nu^* F_\calH$$
	for some two-form $\beta$ on $B$. 
	
	Let us now observe that after contracting with $\omega_B$ we have
	$$\contr_{\omega_B} \rho_\calH + \pi^* f= \contr_{\omega_B} \nu^* F_\calH = \Lap_\calV \contr_{\omega_B} \mu^* F_\calH$$
	for some function $f=\contr_{\omega_B} \beta$ on $B$. This second equality follows from the observation that $F_\calH$ on $X$ actually arises from a two-form defined on $B$, which is the very same $F_A\in \Omega^2(B, \ad P_\sigma)$ defining the curvature of the connection $A$ on $P_\sigma$ (after composing with the Lie algebra homomorphism from $\k$ to $\Ham(\calV)$). Since the two-form $\omega_B$ contracting $\mu^* F_\calH$ is also pulled back from the base, we can consider $\contr_{\omega_B} F_\calH$ as a section of the bundle of Hamiltonian vector fields on each fibre over $B$, and we have $\contr_{\omega_B} \mu^* F_\calH = \mu^* \contr_{\omega_B} F_\calH$ and similarly for $\nu^*$. Here we also use the identity
	$$\Lap_\calV \mu^* s = \nu^* s$$
	where $s: B\to \Ham(\calV)$ is a section of the relative Hamiltonian vector field bundle of the fibres of $X$ over $B$. This identity \emph{defines} $\nu^*$ for a general K\"ahler fibration, but in this case follows immediately from the descent of the comoment maps $\mu^*$ and $\nu^*$ to $X$ with respect to the diagonal action on $P\times Y$. 
	
	To conclude, we note that since the projection $p$ is invariant under the addition of a contraction of a form pulled back from the base, we have
	$$p(\Lap_\calV \contr_{\omega_B} \mu^* F_\calH) = p(\contr_{\omega_B} \nu^* F_\calH) = p(\contr_{\omega_B} \rho_\calH).$$
\end{proof}

\begin{theorem}\label{theorem:maintheorembody}
	If a complex unitary connection $A$ on $P$ is Hermite--Einstein with respect to the Hermitian structure $\sigma$ defining the reduction of structure group to $K$, then the induced symplectic connection $\omega_X$ on $P$ is an optimal symplectic connection. 
\end{theorem}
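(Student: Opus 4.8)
The plan is to feed the Hermite--Einstein condition into the simplification of the optimal symplectic connection equation provided by \cref{lemma:horizontalcomponentricciform}.

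First I would record that $\omega_X$ is an admissible candidate: by \cref{prop:inducedsymplecticconnection} it is a closed $(1,1)$-form in $c_1(H)$ whose restriction to each fibre is the fixed cscK metric $\omega_Y$, so it is relatively cscK and the equation to be verified is
\[
p(\Lap_\calV \contr_{\omega_B} \mu^* F_\calH + \contr_{\omega_B} \rho_\calH) = 0 .
\]
Applying \cref{lemma:horizontalcomponentricciform}, the two summands agree after $p$, so this is equivalent to $p(\Lap_\calV \contr_{\omega_B} \mu^* F_\calH) = 0$.

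Next I would unwind the surviving term using the hypothesis on $A$. Since $\omega_B$ is pulled back from the base and, as recorded in the proof of \cref{lemma:horizontalcomponentricciform}, the symplectic curvature $F_\calH$ on $X$ is the image of the curvature $F_A \in \Omega^2(B,\ad P_\sigma)$ under the Lie algebra homomorphism $\k \to \Ham(\calV)$, contraction commutes with $\mu^*$ and one has $\contr_{\omega_B} \mu^* F_\calH = \mu^*(\contr_{\omega_B} F_A)$, with $\contr_{\omega_B} F_A$ regarded as a section of the bundle of fibrewise Hamiltonian vector fields. The Hermite--Einstein condition is precisely that this section is a covariantly constant central section $\tau$ of $\ad P_\sigma$, induced by a fixed central element $\tau_0 \in \k$. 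Invoking the identity $\Lap_\calV \mu^* s = \nu^* s$ from \cref{lemma:horizontalcomponentricciform}, it then remains to show $p(\nu^*(\tau)) = 0$.

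Finally --- and this is where I expect the real work to lie --- I would analyse $p(\nu^*(\tau)) = p(\Lap_\calV \mu^*(\tau))$. Since $\tau$ is covariantly constant and central, the vertical vector field it generates on $X$ comes from the single element $\tau_0\in\k$ and is holomorphic on the total space, so $\mu^*(\tau)$ is a relative holomorphy potential, lying in the range of $p$; pairing $\Lap_\calV\mu^*(\tau)$ against $\mu^*(\tau)$ in the fibrewise $L^2$ inner product and integrating by parts shows that $p(\nu^*(\tau)) = 0$ is in fact equivalent to $\mu^*(\tau)$ being fibrewise constant, i.e.\ to $\tau_0$ acting trivially on the fibre $(Y,\omega_Y)$ (one may also reach this through the Ricci--moment-map identity $\nu^*([\xi,\eta]) = \rho_Y(v_\xi,v_\eta)$ of \cref{lemma:horizontalcomponentricciform} applied with $\xi = \tau_0$). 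Thus the content of the theorem is that a Hermite--Einstein connection has contracted curvature acting trivially on the fibres. For $G=\GL(r,\CC)$ acting on $Y=\PP^{r-1}$ this is immediate, since the central section $\tau$ is then a multiple of the identity and the scalars act trivially on projective space; for a general reductive $G$ acting linearly on $(Y,H_Y)$ one must use that the Hermite--Einstein $\tau$ is the topologically determined central section, whose behaviour on $Y$ is pinned down by the linearity of the action --- establishing this, and hence that $\nu^*(\tau)$ lies in $\ker p$, is the main obstacle in the proof.
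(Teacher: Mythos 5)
Your argument follows the paper's proof almost line for line: the reduction of the optimal symplectic connection equation to $p(\Lap_\calV \contr_{\omega_B}\mu^* F_\calH)=0$ via \cref{lemma:horizontalcomponentricciform}, the identification $\contr_{\omega_B}\mu^* F_\calH = \mu^*(\contr_{\omega_B} F_A)$, and the fibrewise $L^2$ pairing with integration by parts showing that $p(\Lap_\calV h)=0$ for a relative mean-zero holomorphy potential $h$ forces $h$ to be fibrewise constant, hence zero, are exactly the steps the paper takes; the paper uses this to conclude that the equation is equivalent to $\contr_{\omega_B}\mu^* F_\calH=0$. So up to your final paragraph there is nothing to add.

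The step you flag as the main obstacle --- that the Hermite--Einstein section $\tau$ must induce the zero vector field on the fibre, so that $\mu^*(\tau)=0$ --- is precisely where the paper is terse: it closes the proof in one sentence by invoking ``the correspondence between the Lie algebra of automorphisms and holomorphy potentials'' to read the vanishing of $\mu^*(\contr_{\omega_B}F_A)$ as the statement that $\contr_{\omega_B}F_A$ is a central section of $\ad P$. In other words the paper identifies central elements of $\g$ with elements acting trivially on $(Y,\omega_Y)$ and supplies no further argument, so you have not missed a hidden lemma; your diagnosis of where the content lies is accurate. Your caution is also substantively justified: for an arbitrary reductive $G$ acting linearly on $(Y,H_Y)$ this identification is an extra hypothesis, not a formality. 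It holds in the motivating settings (semisimple $G$, where $\tau=0$; $G=\GL(r,\CC)$ on $\PP^{r-1}$; more generally when the connected centre of $G$ acts on $Y$ only through the linearisation $H_Y$), but if the centre moves $Y$ the conclusion can fail: take $G=\CC^*$ acting on $Y=\PP^1$ by scaling and $P$ the frame bundle of a line bundle $L'$ of non-zero degree, so $X=\PP(\calO\oplus L')$. A connection with harmonic curvature is Hermite--Einstein with $\tau=c\neq 0$, yet $\mu^*(\tau)=c\,\mu_0$ is a non-zero eigenfunction of the fibrewise Laplacian, so $p(\Lap_\calV\mu^*(\tau))\neq 0$ and the induced connection is not optimal --- consistent with the projective-bundle correspondence recalled in the introduction, since $\calO\oplus L'$ is unstable. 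Note also that your parenthetical route via $\nu^*([\xi,\eta])=\rho_Y(v_\xi,v_\eta)$ with $\xi=\tau_0$ central only yields $\iota_{v_{\tau_0}}\rho_Y$ vanishing on induced vector fields, not $v_{\tau_0}=0$, as this example confirms. So to complete the proof along either your lines or the paper's, one should make explicit and use the hypothesis that the centre of $\g$ (or at least the covariantly constant $\tau$ determined by the topology of $P$) acts trivially on the fibre; granting that, your argument closes exactly as the paper's does.
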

\begin{proof}
	Let $(X,\omega_X)\to (B,\omega_B)$ be an isotrivial K\"ahler fibration with cscK fibres and base, with symplectic connection $\omega_X$ induced from a holomorphic principal bundle $P$. Then by \cref{lemma:horizontalcomponentricciform} the optimal symplectic connection equation for $\omega_X$ reduces to
	$$p(\Lap_\calV \contr_{\omega_B} \mu^* F_\calH) = 0.$$
	Suppose now that $\omega_X$ is an optimal symplectic connection, and that $\contr_{\omega_B} \mu^* F_\calH = h$ for some smooth function $h\in C^{\infty}(X)$. Then $h$ restricts to a mean-zero holomorphy potential on each fibre of $X$, because the isotrivial fibration $X$ arises from a holomorphic principal bundle and the curvature takes values in fibrewise holomorphic vector fields. Since $p$ is the orthogonal projection onto such relative holomorphy potentials, we have
	\begin{align*}
		0 &= \ip{h, p(\Lap_\calV h)}\\
		&= \int_X h p(\Lap_\calV h) \omega_X^m \wedge \omega_B^n\\
		&= \int_X h \Lap_\calV h \omega_X^m \wedge \omega_B^n\\
		&= \int_X |\nabla h|^2 \omega_X^m \wedge \omega_B^n.
	\end{align*}
	Thus the holomorphy potential $h$ is covariantly constant, and in fact zero as $\mu^*$ lands in the mean-zero holomorphy potentials. Thus we have $p(\Lap_\calV h) = 0$ if and only if $h$ is zero. In particular the optimal symplectic connection equation is equivalent to
	$$\contr_{\omega_B} \mu^* F_\calH = 0.$$
	
	Now if $\omega_X$ arose from a principal bundle connection $A$, then the correspondence between the Lie algebra of automorphisms and holomorphy potentials tells us that the above equation is equivalent to asking 
	$$\contr_{\omega_B} F_A = \tau$$
	for some central section of $\ad P$, so if $A$ is Hermite--Einstein then the induced symplectic connection $\omega_X$ is an optimal symplectic connection. 
\end{proof}

As noted, the Hitchin--Kobayashi correspondence for principal bundles allows us to interpret the above theorem in terms of the algebraic geometry of $P$. 

\begin{corollary}\label{corollary:stableprincipal}
	If a principal bundle $P\to (B,L)$ is polystable over a cscK base, and if the structure group $G$ of $P$ acts linearly on a polarised variety $(Y,H_Y)$ and admits a restriction to a $K$ action for a maximal compact subgroup which acts on $Y$ by isometries with respect to a cscK metric $\omega_Y\in c_1(H_Y)$, then the associated fibration $(X,H)\to (B,L)$ admits an optimal symplectic connection.
\end{corollary}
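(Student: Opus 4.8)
The plan is to combine the Hitchin--Kobayashi correspondence for principal bundles with \cref{theorem:maintheorembody}, whose proof already carries all of the analytic content. First I would invoke \cite{subramanian1988einstein,anchouche2001einstein}: since $P\to(B,\omega_B)$ is $[\omega_B]$-polystable, it admits a Hermite--Einstein connection, that is, a Hermitian structure $\sigma$ reducing the structure group to a maximal compact subgroup of $G$ together with its Chern connection $A$ satisfying $\contr_{\omega_B}F_A=\tau$ for a covariantly constant central section $\tau$ of $\ad P$.

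Next I would reconcile the maximal compact subgroup produced by the correspondence with the distinguished $K\subset G$ acting isometrically on $(Y,\omega_Y)$. A priori the correspondence yields a reduction to some maximal compact $K'$, but since all maximal compact subgroups of the reductive group $G$ are conjugate there is $g\in G$ with $gK'g^{-1}=K$, and the $G$-equivariant diffeomorphism $G/K'\to G/K$ sending $hK'$ to $hgK$ induces a fibrewise diffeomorphism $P/K'\to P/K$ transporting $\sigma$ to a reduction of structure group to $K$ without changing the underlying connection $A$ on $P$; equivalently, one simply notes that the Hermite--Einstein equation $\contr_{\omega_B}F_A=\tau$ only sees the curvature $F_A\in\Omega^2(B,\ad P)$ and is insensitive to the choice of maximal compact. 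Either way I would arrange $\sigma:B\to P/K$, so that $A$ is a complex connection unitary with respect to a reduction to $K$ and Hermite--Einstein with respect to it.

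The remainder is then formal. Because $A$ is complex and unitary with respect to $\sigma$, \cref{prop:inducedsymplecticconnection} produces an induced symplectic connection $\omega_X\in c_1(H)$ on $X=P\times_G Y=P_\sigma\times_K Y$, which is relatively cscK with model fibre $(Y,\omega_Y)$; and since $A$ is Hermite--Einstein with respect to $\sigma$ and $B$ is K\"ahler, \cref{theorem:maintheorembody} applies directly and shows that $\omega_X$ is an optimal symplectic connection. I expect the only real obstacle to be the bookkeeping of the second paragraph, matching the compact group furnished by the Hitchin--Kobayashi correspondence with the prescribed isometric subgroup $K$, since once that identification is in place \cref{prop:inducedsymplecticconnection} and \cref{theorem:maintheorembody} leave nothing further to do. (The cscK assumption on $B$ is inherited from the statement of \cref{theorem:maintheorembody} and is stronger than needed; only the K\"ahler property of $\omega_B$ enters.)
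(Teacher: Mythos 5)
Your argument is correct and is essentially the paper's (implicit) proof: the corollary follows directly by combining the Hitchin--Kobayashi correspondence of \cite{subramanian1988einstein,anchouche2001einstein} with \cref{theorem:maintheorembody} via \cref{prop:inducedsymplecticconnection}. Your extra care in conjugating the maximal compact subgroup furnished by the correspondence onto the prescribed isometric $K$ is a harmless refinement that the paper leaves tacit, and your parenthetical observation that only the K\"ahler property of $\omega_B$ is needed for the optimal symplectic connection itself is also accurate.
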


By \cref{corollary:cscktotalspace} one can use \cref{theorem:maintheorembody} to generate new examples of cscK metrics. 

\begin{example}
	Let $P\to (B,L)$ be a non-trivial, stable principal $\SL(2,\CC)$-bundle over a polarised variety $(B,L)$ of dimension at least two (every such principal bundle is trivial in dimension one, and the construction reduces to a product cscK metric in that case). Such a principal bundle could be constructed as the frame bundle of a non-trivial stable rank two holomorphic vector bundle over $(B,L)$. Assume $(B,L)$ admits a cscK metric and has discrete automorphism group. Let $(Y,-K_Y)$ denote the Mukai--Umemura threefold, which has automorphism group $\SL(2,\CC)$ and admits a K\"ahler--Einstein metric \cite[\S 5]{donaldson2008kahler}. Then the associated bundle $$X= P\times_{\SL(2,\CC)} Y$$
	admits an optimal symplectic connection by \cref{theorem:maintheorembody} and since the base has discrete automorphisms and $P$ is simple, the total space of the fibre bundle has discrete automorphisms and by \cref{corollary:cscktotalspace} admits a cscK metric. 
\end{example}

The construction demonstrated above is general, and produces a wide variety of new examples of cscK metrics on the total space of holomorphic fibre bundles.

\subsection{Induced Hermite--Einstein structures\label{section:OSCimpliesHE}}

Let $\pi: (X,\omega_X,H)\to (B,\omega_B,L)$ be an isotrivial relatively cscK fibration arising from a principal bundle $\varpi: P\to (B,L)$ as described by \cref{lemma:principalbundle}. In this section we will describe how to pass from the symplectic connection $\omega_X$ on $X$ to a principal bundle connection $A$ on $P$, and show that when $\omega_X$ is optimal, the induced principal bundle connection $A$ is Hermite--Einstein.

\subsubsection{Alternative description of $P$\label{section:alternativeprincipalbundle}}
First we proceed by giving an alternative invariant description of the principal bundle $P$ associated to the isotrivial fibration $X$. This description is inspired by the case of infinite-dimensional principal bundles of symplectomorphisms for a symplectic fibration \cite[Rmk. 6.4.11]{mcduff2017introduction}. 

Let $(Y,\omega_Y,H_Y)$ denote the model fibre of the isotrivial fibration. Then the fibre $P_b$ of $P$ over a point $b\in B$ is given by the set of all biholomorphisms $f: Y\to X_b$ isotopic to the identity which also lift to the linearisations $H_Y$ and $\rest{H}{b}$. This set is a $G_0$-torsor for the $G_0 = \Aut_0(Y,H_Y)$ acting by precomposition, which defines the right $G_0$-action on $P$. 

The tangent space $T_f P\subset \Gamma(f^* TX)$ for some $f\in P_b$ is given by all vector fields $v\in \Gamma(\rest{TX}{X_b})$ such that $d(\pi \circ f)v : Y \to T_b B$ is constant and for which the vertical part of $v$ with respect to the symplectic connection $\omega_X$ is holomorphic. The vector fields which preserve the K\"ahler structure of the fibration further satisfy the compatibility condition that the one-form
$$\omega_X(v, df(-))\in \Omega^1(Y)$$
is closed, which implies the vector field preserves the symplectic form $\rest{\omega_X}{b}$.

The vertical vectors $V_f \subset T_f P$ consist of all vector fields of the form $v=df \circ u: Y\to \rest{TX}{X_b}$ for some $u\in \h(Y,H_Y)$ a holomorphic vector field on $Y$ which generates an automorphism lifting to $H_Y$. 

To define horizontal vectors, note that using the Ehresmann connection defined by $\omega_X$, any vector $v_0 \in T_b B$ admits a unique horizontal lift to a vector field $v_0^\# \in \Gamma(\rest{TX}{X_b})$. Define the horizontal vectors $H_f\subset T_f P$ as the vector fields of the form $v = v_0^\# \circ f$ for some $v_0^\#: X_b \to \rest{TX}{X_b}$ for $v_0\in T_b B$. 

To observe the splitting, note that any vector field $v\in T_f P$, when viewed as a vector field on $X_b$, admits a splitting with respect to $\omega_X$, and the condition that $d(\pi \circ f)v$ is constant is exactly the statement that the horizontal component of $v$ is of the form $v_0^\#$ for some $v_0\in T_b B$. 

Thus the connection $\omega_X$ on $X$ induces a connection on $P$. We note that the equivariance of $H\subset TP$ with respect to the action of $G_0 = \Aut_0 (Y,H_Y)$ follows from the fact that $dR_g (v) = v_0^\# \circ (f\circ g)$ for $v=v_0^\# \circ f\in T_f P$ and $g\in G_0$, so $dR_g (H_f) \subset H_{f\cdot g}$.

Again we note that if we restricted to holomorphic isometries and vector fields preserving the K\"ahler structure then the same construction above would afford us a principal bundle connection on the principal $K_0$-bundle $P_\sigma$ which is the reduction of structure group for $P$ as in \cref{lemma:principalbundle}. This connection on $P_\sigma$ induces the connection on $P$ under the associated bundle construction, as can be seen easily by noting that the induced Ehresmann connection under the inclusion $K_0 \hookrightarrow G_0$ simply views a horizontal vector $v=v_0^\# \circ f$ for $f:Y \to X_b$ a holomorphic isometry as a vector $v$ for $f: Y \to X_b$ a biholomorphism, forgetting the isometry. This clearly maps the horizontal subspaces for $P_\sigma$ into those for $P$.

\subsubsection{Curvature of the connection on $P$}

We have described how a symplectic connection $\omega_X$ on $X$ induces a principal bundle connection on $P$, which we denote by $A$. The curvature of $A$ is a two-form on $B$ with values in $\ad P$, a Lie algebra bundle with fibre $\h$, defined by
$$F_A(v_1,v_2) = [v_1^\#,v_2^\#]^{\text{vert}}\in \h(X_b,\rest{\omega_X}{b},\rest{H}{b}))$$
where $v_1,v_2\in T_b B$. 

The fact that $A$ is Hermite--Einstein follows by observing that since any two cscK metrics on the fibre $X_b$ may be transformed into each other by the action of an element $g\in \Aut_0(X_b, \rest{\omega_X}{b})$, up to a gauge transformation of the $G_0$-bundle $P$ we can assume that the symplectic connection $\omega_X$ on $X$ arises as the induced symplectic connection for the associated bundle construction $X=P\times_{G_0} Y$ exactly as in \cref{prop:inducedsymplecticconnection}. For such symplectic connections \cref{lemma:horizontalcomponentricciform} and \cref{theorem:maintheorembody} show that the conditions of being optimal is satisfied if \emph{and only if} $A$ is Hermite--Einstein. Thus we obtain:

\begin{theorem}
	An optimal symplectic connection $\omega_X$ on an isotrivial relatively cscK fibration $(X,H)\to (B,\omega_B,L)$ induces a Hermite--Einstein metric on the associated principal bundle $P$ of relative automorphisms described in \cref{lemma:principalbundle} and \cref{section:alternativeprincipalbundle}.
\end{theorem}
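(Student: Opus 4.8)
The plan is to run the proof of \cref{theorem:maintheorembody} in reverse. The crucial point is that \cref{lemma:horizontalcomponentricciform}, and with it the reduction of the optimal symplectic connection equation carried out in \cref{theorem:maintheorembody}, applies to \emph{any} symplectic connection on $X$ of the shape produced by \cref{prop:inducedsymplecticconnection}.

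The first step is to place ourselves in that situation. The given optimal symplectic connection $\omega_X$ induces, via \cref{section:alternativeprincipalbundle}, a principal bundle connection $A$ on the $G_0$-bundle $P$ of relative automorphisms, together with the reduction $\sigma$ to $K_0$ of \cref{lemma:principalbundle}; one first checks that $A$ is the Chern connection of $\sigma$, so that it is meaningful to ask whether $(A,\sigma)$ is Hermite--Einstein. By uniqueness of cscK metrics up to the action of $\Aut_0(X_b,\rest{\omega_X}{b})=G_0$, for each $b$ an element of $\sigma(b)$ identifies $\rest{\omega_X}{b}$ with the fixed model metric $\omega_Y$; performing this over a trivialising cover (as in the proof of \cref{lemma:principalbundle}) amounts to a gauge transformation of $P$, which acts on $X=P\times_{G_0}Y$ as a relative biholomorphism, under which the optimal symplectic connection property is preserved by \cref{thm:uniqueness}. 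Hence we may assume that $\omega_X$ is precisely the symplectic connection induced from $(P,A)$ and $(Y,\omega_Y)$ by \cref{prop:inducedsymplecticconnection}.

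Now \cref{lemma:horizontalcomponentricciform} gives $p(\contr_{\omega_B}\rho_\calH)=p(\Lap_\calV\contr_{\omega_B}\mu^*F_\calH)$, so the optimal symplectic connection equation for $\omega_X$ collapses to $p(\Lap_\calV\contr_{\omega_B}\mu^*F_\calH)=0$. Since the curvature of $A$ takes values in fibrewise holomorphic vector fields, $h:=\contr_{\omega_B}\mu^*F_\calH$ restricts to a relatively mean-zero holomorphy potential on every fibre, so $p(h)=h$ and, exactly as in the proof of \cref{theorem:maintheorembody},
\[
0=\ip{h,p(\Lap_\calV h)}=\ip{h,\Lap_\calV h}=\int_X |\nabla h|^2\,\omega_X^m\wedge\omega_B^n,
\]
which forces $h=0$, i.e. $\contr_{\omega_B}\mu^*F_\calH=0$. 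Translating this through the identification of relative holomorphy potentials with $\ad P$, it says $\contr_{\omega_B}F_A=\tau$ for a central section $\tau$ of $\ad P$; that is, $(A,\sigma)$ is weak Hermite--Einstein, hence Hermite--Einstein after the conformal modification of $\sigma$ recalled after the definition. Reversing the gauge transformation gives the statement for the original data.

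The step I expect to be the main obstacle is the bookkeeping in the second paragraph: verifying that the connection $A$ defined abstractly in \cref{section:alternativeprincipalbundle} genuinely coincides, once trivialisations are adapted to $\sigma$, with the connection fed into \cref{prop:inducedsymplecticconnection} and with the Chern connection of the reduction $\sigma$ of \cref{lemma:principalbundle}, so that the conclusion concerns the correct Hermitian structure; and confirming that a gauge transformation of $P$ really does act on $X$ as a relative biholomorphism preserving optimality. Everything downstream of that reduction is a verbatim rerun, in the reverse direction, of arguments already in place.
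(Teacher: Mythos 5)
Your proposal follows essentially the same route as the paper: reduce, via a fibrewise gauge transformation using uniqueness of cscK metrics up to $\Aut_0$, to the case where $\omega_X$ is the induced symplectic connection of \cref{prop:inducedsymplecticconnection}, and then invoke \cref{lemma:horizontalcomponentricciform} together with the argument of \cref{theorem:maintheorembody}, whose proof already establishes the equivalence of the optimal symplectic connection equation with $\contr_{\omega_B}F_A$ being central. The extra details you supply (the integration by parts, the weak versus genuine Hermite--Einstein point, and matching the abstract connection of \cref{section:alternativeprincipalbundle} with the Chern connection of $\sigma$) are exactly the steps the paper treats tersely or leaves implicit, so the proposal is correct and consistent with the paper's proof.
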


We remark that from the construction of $P$ it is clear that a holomorphic automorphism of $P$ is the same data as a holomorphic fibre bundle automorphism of $X$, and the uniqueness of the Hermite--Einstein connection $A$ on $P$ up automorphism shows that the induced optimal symplectic connection on $X$ is unique up to automorphisms (and pullback of a two-form from $B$). This recovers a special case of the uniqueness result \cref{thm:uniqueness} of Dervan--Sektnan and Hallam.

\subsection{Product fibrations}

To conclude, we briefly discuss the case of product fibrations. The following holds for any product of holomorphic fibrations.

\begin{proposition}\label{prop:products}
	If $\omega_X$ and $\omega_{X'}$ are optimal symplectic connections on two fibrations $(X,H),(X',H')\to (B,L)$, then the product metric on the fibred product $Z=X\times_B X'$ is an optimal symplectic connection.
\end{proposition}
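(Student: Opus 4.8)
The plan is to reduce the optimal symplectic connection equation on $Z = X\times_B X'$ to the two equations on $X$ and $X'$ that we already know hold, by analysing each ingredient of the equation ($\mu^* F_\calH$, $\rho_\calH$, $\Lap_\calV$, and the projection $p$) on the fibred product. First I would set up notation: a fibre of $Z$ over $b\in B$ is $Z_b = X_b\times X_b'$, equipped with the product K\"ahler metric $\rest{\omega_X}{b} + \rest{\omega_{X'}}{b}$, which is relatively cscK since a product of cscK metrics is cscK. The vertical tangent bundle splits as $\calV_Z = \calV_X\oplus \calV_{X'}$ (pulled back appropriately), and the Ehresmann connection $\calH_Z$ of the product metric $\omega_Z = \omega_X + \omega_{X'}$ is the "diagonal" of $\calH_X$ and $\calH_{X'}$: a vector on $B$ lifts to the pair of its horizontal lifts. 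Consequently the symplectic curvature decomposes as $F_{\calH_Z} = F_{\calH_X}\oplus F_{\calH_{X'}}$, taking values in $\Ham(\calV_X)\oplus\Ham(\calV_{X'})\subset \Ham(\calV_Z)$, and likewise $\mu^* F_{\calH_Z} = \mu_X^* F_{\calH_X} + \mu_{X'}^* F_{\calH_{X'}}$ where the two terms are pulled back from $X$ and $X'$ respectively.

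Next I would check that the relative Ricci form is additive: since $(\omega_Z)_{\calV}^{m+m'}$ is (up to a binomial constant) $(\omega_X)_{\calV}^m\wedge(\omega_{X'})_{\calV}^{m'}$, taking $-i\deldelbar\log$ of this on the total space $Z$ gives $\rho_Z = \rho_X + \rho_{X'}$ as forms on $Z$ (more precisely, pulled back from $X$ and $X'$ via the two projections $Z\to X$, $Z\to X'$ over $B$), and hence the horizontal parts add: $(\rho_Z)_{\calH} = (\rho_X)_{\calH} + (\rho_{X'})_{\calH}$. The vertical Laplacian $\Lap_{\calV_Z}$ on a function that is a sum $f_X + f_{X'}$ of a function on $X$ and a function on $X'$ acts as $\Lap_{\calV_X} f_X + \Lap_{\calV_{X'}} f_{X'}$, because on the product fibre the Laplacian of the product metric splits and kills cross terms. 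Combining these observations, the quantity inside the projection for $Z$,
$$\Lap_{\calV_Z}\contr_{\omega_B}\mu^* F_{\calH_Z} + \contr_{\omega_B}(\rho_Z)_{\calH},$$
equals the sum of the corresponding quantity for $X$ (a function pulled back from $X$) and that for $X'$ (pulled back from $X'$).

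Finally, the key point about the projection $p_Z$: the bundle $E_Z$ of relative mean-zero holomorphy potentials on the fibres $Z_b = X_b\times X_b'$ decomposes as $E_X\oplus E_{X'}$, since the holomorphy potentials of a product K\"ahler manifold are exactly the sums of holomorphy potentials pulled back from the two factors (the space of holomorphic vector fields vanishing somewhere on a product is the direct sum of the corresponding spaces on the factors), and the $L^2$-orthogonal decomposition of $C^\infty(Z_b)$ respects this splitting. Therefore $p_Z$ applied to a sum $g_X + g_{X'}$ with $g_X$ pulled back from $X$ and $g_{X'}$ from $X'$ equals $(p_X g_X) + (p_{X'} g_{X'})$. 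Applying this to the displayed sum above gives
$$p_Z\!\left(\Lap_{\calV_Z}\contr_{\omega_B}\mu^* F_{\calH_Z} + \contr_{\omega_B}(\rho_Z)_{\calH}\right) = p_X(\cdots)_X + p_{X'}(\cdots)_{X'} = 0 + 0 = 0,$$
which is the optimal symplectic connection equation for $\omega_Z$. I expect the main obstacle to be the bookkeeping around pullbacks — carefully justifying that $\mu^* F_{\calH_Z}$, $\rho_Z$, and the holomorphy-potential bundle genuinely split as claimed, rather than merely restricting correctly on each fibre — since one must track forms on the total space $Z$ and confirm the horizontal/vertical decompositions are compatible with the product structure via \cite[Lem. 3.9]{dervan2019optimal}-type arguments. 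Once the additivity of each ingredient and the block-diagonal nature of $p_Z$ are established, the conclusion is immediate.
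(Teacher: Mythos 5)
Your proposal is correct and follows essentially the same route as the paper: verify term by term that the symplectic curvature, the relative Ricci form, the vertical Laplacian and the projection $p$ all split additively over the fibred product, with the cross terms vanishing. The only minor difference is that you justify the splitting of $p$ via the decomposition of the bundle of fibrewise holomorphy potentials and $L^2$-orthogonality, whereas the paper does it by an explicit Fubini computation of the fibrewise averages; both are fine.
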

\begin{proof}
	This is simply a matter of verifying that the various terms appearing in the optimal symplectic connection equation split with respect to fibred products in the expected way. Let us first note that the curvature $F_\calH$ of the product metric $\omega=\omega_X+\omega_{X'}$ is the direct sum $F_\calH = F_X + F_{X'}$ where $F_X, F_{X'}$ denote the curvatures of $\omega_X, \omega_{X'}$ on $X$ and ${X'}$ respectively. One also observes that
	$$d\mu^* F_\calH = \iota_{F_\calH} \omega = \iota_{F_\calH} (\omega_X+ \omega_{X'}) = \iota_{F_X} \omega_X + \iota_{F_{X'}} \omega_{X'}$$
	from which we conclude that $\mu^* F_\calH = \mu_X^* F_X + \mu_{X'}^* F_{X'}$. Similarly it is straight forward to see $\Lap_\calV = \Lap_X + \Lap_{X'}$ where $\Lap_X$ and $\Lap_{X'}$ denote the vertical Laplacians on $X$ and $X'$ respectively. .
	
	If the dimension of the fibres of $X$ and ${X'}$ are $m_X, m_{X'}$ respectively, for the projection operator $p$ applied to a sum function $\varphi=\varphi_X + \varphi_{X'}$ where $\varphi_X, \varphi_{X'}$ depend only on the fibre coordinates of $X$ and of ${X'}$ respectively, we have
	\begin{align*}
		p(\rest{\varphi}{b}) &= \rest{\varphi}{b} - \frac{\int_{X_b\times {X'}_b} \rest{\varphi}{b} (\omega_X + \omega_{X'})^{m_X+m_{X'}}}{\int_{X_b\times {X'}_b} (\omega_X + \omega_{X'})^{m_X+m_{X'}}}\\
		&= \rest{\varphi}{b} - \frac{\int_{X_b\times {X'}_b} \rest{\varphi}{b} \omega_X^{m_X} \wedge \omega_{X'}^{m_{X'}}}{\int_{X_b\times {X'}_b} \omega_X^{m_X} \wedge \omega_{X'}^{m_{X'}}}\\
		&= \left( \rest{\varphi_X}{b} - \frac{\vol {X'}_b}{\vol {X'}_b}\frac{\int_{X_b} \rest{\varphi_X}{b} \omega_X^{m_X}}{\int_{X_b} \omega_X^{m_X}}\right) + \left( \rest{\varphi_{X'}}{b} - \frac{\vol X_b}{\vol X_b}\frac{\int_{{X'}_b} \rest{\varphi_{X'}}{b} \omega_{X'}^{m_{X'}}}{\int_{{X'}_b} \omega_{X'}^{m_{X'}}}\right)\\
		&=p_X(\rest{\varphi_X}{b}) + p_{X'}(\rest{\varphi_{X'}}{b}),
	\end{align*}
	where in the second to last step we have used Fubini's theorem.
	
	Finally, let us note that if $\rho_X$ and $\rho_{X'}$ denote the relative Ricci forms of $\omega_X$ and $\omega_{X'}$, then we have
	\begin{align*}
		\rho &= i \deldelbar \log \det (\omega_X + \omega_{X'})\\
		&= i \deldelbar \log \det \omega_X \det \omega_{X'}\\
		&= i \deldelbar \log \det \omega_X + i \deldelbar \log \det \omega_{X'}\\
		&= \rho_X + \rho_{X'}.
	\end{align*}
	Here we have used that the determinant of the product metric $\omega_X + \omega_{X'}$ is the product of the determinants, as can be seen by using the block matrix decomposition of the metric in local product coordinates, and note that any derivative in $\deldelbar$ in the $X$ fibre direction will vanish on $\log \det \omega_{X'}$, which depends only on the ${X'}$ fibre coordinates, and vice versa.
	
	In conclusion, since $\Lap_X \mu_{X'}^* F_{X'} = 0$ and $\Lap_{X'} \mu_X^* F_X=0$, we observe that
	\begin{multline*}
		p(\Lap_\calV \contr_{\omega_B} \mu^* F_\calH + \contr_{\omega_B} \rho_\calH)\\
		= p(\Lap_{\calV,X} \contr_{\omega_B} \mu_X^* F_X + \contr_{\omega_B} \rho_{H,X}) + p(\Lap_{\calV,{X'}} \contr_{\omega_B} \mu_{X'}^* F_{X'} + \contr_{\omega_B} \rho_{\calH,{X'}})
	\end{multline*}
	and thus we have the result.
\end{proof}

The above proposition combined with \cref{theorem:maintheorembody} produces a wealth of examples of optimal symplectic connections. Namely all fibred products of projectivisations of polystable vector bundles admit optimal symplectic connections, which indeed follows from the above proposition and previous work about the existence of optimal symplectic connections on projective bundles \cite{dervan2019optimal}.

This is somewhat curious from the perspective of vector bundles, as the direct sum of polystable vector bundles is not necessarily polystable (and therefore does not necessarily admit a Hermite--Einstein metric) unless the vector bundles have the same slope. Indeed the process of taking a direct sum and then projectivisation does not commute with taking projectivisation and then fibred product, in regards to the existence of optimal symplectic connections. On the other hand, a product of cscK manifolds is always cscK without regard for any topological matching criteria, so in this sense the study of optimal symplectic connections is closer to the study of cscK metrics. 

From the perspective of principal bundles, if $\fr(E)$ denotes the frame bundle of a vector bundle $E$, which is a principal $\GL(\rk E,\CC)$-bundle, then the fact that $\PP(E)\times_B \PP(F)$ admits an optimal symplectic connection when $E$ and $F$ are polystable corresponds to the fact that $\fr(E)\times_B \fr(F)$ is a polystable principal bundle. By the Hitchin--Kobayashi correspondence for principal bundles, this is known to be equivalent to the slope polystability of the slope zero vector bundle $$\ad (\fr(E)\times_B \fr(F)) = \ad \fr(E) \oplus \ad \fr(F) = \End(E) \oplus \End(F).$$ On the other hand if $E$ and $F$ have different slopes, so $\mu(E)<\mu(F)$ without loss of generality, then the principal bundle $\fr(E\oplus F)$ has adjoint bundle $\End(E\oplus F) = \End(E) \oplus \Hom(E,F)\oplus \End(F)$ which is not polystable, and so $\fr(E\oplus F)$ is not polystable either, which agrees with the fact that $\PP(E\oplus F)$ does not admit an optimal symplectic connection. 

It is known (see \cite{ramanan1984some}) that if $P$ is a polystable principal $G$-bundle and $\rho: G\to H$ is a representation which sends the connected component of the identity of the center, $Z_0(G)$, to the corresponding component $Z_0(H)$, then the associated principal $H$-bundle $P\times_\rho H$ is also polystable. Now the direct sum structure of $E\oplus F$ provides a reduction of structure group from $\fr(E\oplus F)$ to $\fr(E)\times_B \fr(F)$, however the associated homomorphism $\GL(\rk E,\CC) \times \GL(\rk F,\CC) \to \GL(\rk E + \rk F,\CC)$ does not map the centre into centre, as a central element
$$\begin{pmatrix}
	\lambda \id_E & 0\\
	0 & \mu \id_F
\end{pmatrix}$$
does not commute inside the larger group $\GL(\rk E + \rk F,\CC)$ unless $\lambda = \mu$. 

We also note more generally that by the uniqueness of optimal symplectic connections \cref{thm:uniqueness}, if a holomorphic fibration $(Z,H_Z)$ admits a fibred product decomposition into factors $(X,H_X)$ and $(X',H_X')$ admitting optimal symplectic connections, then any optimal symplectic connection on $Z$ in $c_1(H_Z) = c_1(H_X\boxtimes H_X')$ must be of fibred product form. If a \emph{reducible holomorphic fibration} is a fibration admitting such a fibred product decomposition, then one may restrict the study of optimal symplectic connections to irreducible holomorphic fibrations.

\bibliographystyle{alpha}

\bibliography{canonicalmetricsonholomorphicfibrebundles.bib}

\end{document}